  \tikzset{mylabel/.style  args={at #1 #2  with #3}{
    postaction={decorate,
    decoration={
      markings,
      mark= at position #1
      with  \node [#2] {#3};
 } } } }
	\def\MR#1{}
\newcommand{\lk}{\mathrm{lk}}
\newcommand{\st}{\mathrm{st}}
\newcommand{\dist}{\mathrm{dist}}
\newcommand{\cost}{\mathrm{cost}}
\newcommand{\tr}{\textrm{tr}}
\def\opn#1#2{\def#1{\operatorname{#2}}}
\opn\Cl{Cl} \opn\deg{deg} \opn\Stab{Stab} \opn\aff{aff} \opn\div{div}
\opn\cone{cone} \opn\End{End} \opn\mod{mod}  \opn\pdim{pdim} \opn\diag{diag} \opn\vert{vert} \opn\m{m} \opn\V{V}
\opn\Cone{Cone} \opn\Pyr{Pyr} \opn\max{max} \opn\min{min} \opn\int{int} \opn\rev{rev} \opn\ker{ker} \opn\lat{lat} \opn\pull{pull}
\opn\cok{coker} \opn\ant{ant}
\opn\inte{int}
\newcommand{\kk}{\mathbb{k}}
\newcommand{\NN}{\normalfont\mathbb{N}}
\newcommand{\ZZ}{\normalfont\mathbb{Z}}
\newcommand{\MM}{{\normalfont\mathfrak{M}}}
\newcommand{\pp}{{\normalfont\mathfrak{p}}}
\newcommand{\aaa}{\mathbf{a}}
\newcommand{\bbb}{\mathbf{b}}
\newcommand{\eee}{\mathbf{e}}
\newcommand{\ee}{{\normalfont\mathbf{e}}}
\newcommand{\Hom}{\normalfont\text{Hom}}
\newcommand{\FF}{\normalfont\mathcal{F}}
\newcommand{\Spec}{\normalfont\text{Spec}}
\def\f0{\mathbf{0}}
\def\1{\mathbf{1}}
\newtheorem{theorem}{Theorem}[section]
\newtheorem{headthm}{Theorem}
\newaliascnt{headcor}{headthm}
\newaliascnt{headconj}{headthm}
\newaliascnt{corollary}{theorem}
\newtheorem{corollary}[corollary]{Corollary}
\newaliascnt{claim}{theorem}
\newaliascnt{lemma}{theorem}
\newtheorem{lemma}[lemma]{Lemma}
\newaliascnt{conjecture}{theorem}
\newaliascnt{proposition}{theorem}
\newtheorem{proposition}[proposition]{Proposition}
\theoremstyle{definition}
\newaliascnt{definition}{theorem}
\newtheorem{definition}[definition]{Definition}
\newaliascnt{notation}{theorem}
\newaliascnt{example}{theorem}
\newaliascnt{examples}{theorem}
\newaliascnt{remark}{theorem}
\newtheorem{remark}[remark]{Remark}
\newaliascnt{question}{theorem}
\newaliascnt{questions}{theorem}
\newaliascnt{problem}{theorem}
\newaliascnt{construction}{theorem}
\newaliascnt{setup}{theorem}
\newaliascnt{algorithm}{theorem}
\newaliascnt{observation}{theorem}
\newaliascnt{defprop}{theorem}
\def\equationautorefname~#1\null{(#1)\null}
\def\sectionautorefname~#1\null{Section #1\null}
\def\subsectionautorefname~#1\null{\S #1\null}
\title{The canonical trace of Stanley--Reisner rings \\that are Gorenstein on the punctured spectrum}
\author[S. Miyashita]{Sora Miyashita}
\address[S. Miyashita]{Department of Pure And Applied Mathematics, Graduate School Of Information Science And Technology, Osaka University, Suita, Osaka 565-0871, Japan}
\email{u804642k@ecs.osaka-u.ac.jp}
\author[M. Varbaro]{Matteo Varbaro}
\address[M. Varbaro]{Dipartimento di Matematica, Universit\'a di Genova, Italy}
\email[M. Varbaro]{varbaro@dima.unige.it}
\dedicatory{We dedicate this work with our deepest gratitude to the memory of Professor J\"urgen Herzog (1941-2024)}
\date{\today}
\keywords{Gorenstein, nearly Gorenstein, Gorenstein on the punctured spectrum, level, Stanley--Reisner rings.}
\subjclass[2020]{Primary 13H10, 13A02; Secondary 05E40.}
\begin{document}

	\maketitle

\begin{abstract}
In this paper we prove that nearly Gorenstein Stanley--Reisner rings of dimension at least 3 are indeed Gorenstein. By previous work of the first author this yields a complete characterization of nearly Gorenstein Stanley--Reisner rings. 
We also show that a Cohen--Macaulay Stanley--Reisner ring is Gorenstein on the punctured spectrum if and only if either it is nearly Gorenstein or its canonical trace is the square of its irrelevant maximal ideal, and that the latter case happens exactly for non--orientable homology manifolds. 
 \end{abstract}

	\section{Introduction}

The trace of the canonical module $\tr(\omega_R)$ of a Cohen--Macaulay local (or graded) ring $R$, describes the non-Gorenstein locus of the corresponding algebraic variety~(\cite[Lemma 6.19]{herzog1971canonical} or \cite[Lemma 2.1]{herzog2019trace}), sparking active research on the ``canonical trace'' in recent years~(\cite{dao2020trace,hibi2021nearly,herzog2019trace,herzog2019measuring,celikbas2023traces,ficarra2024canonical,ficarra2024canonical!}). Of particular interest is the class of Cohen--Macaulay rings $R$ whose canonical trace contains the irrelevant maximal ideal $\MM_R$, a subclass of rings that are Gorenstein on the punctured spectrum. These rings, reintroduced as ``nearly Gorenstein rings'' by Herzog, Hibi and Stamate in \cite{herzog2019trace}, have since become a focal point of study, with numerous works exploring their properties across various contexts (see~\cite{huneke2006rings,ding1993note,miyashita2024linear,hall2023nearly,miyazaki2021Gorenstein,caminata2021nearly,kumashiro2023nearly,bagherpoor2023trace,srivastava2023nearly,hibi2021nearly,lu2024chain,lyle2024annihilators,miyazaki2024non,jafari2024nearly}).

In particular, nearly Gorenstein Stanley--Reisner rings arising from simplicial complexes of low dimensions were investigated in \cite[Section~4]{miyashita2024levelness}.
In conclusion, the study by \cite{miyashita2024levelness} prompts the following question:
\emph{``Is nearly Gorensteinness equivalent to Gorensteinness for Cohen–Macaulay Stanley–Reisner rings of dimension three or higher?''}.
In this paper, we classify the canonical trace of Stanley--Reisner rings that are Gorenstein on the punctured spectrum.
In doing so, we affirmatively resolve the aforementioned question.

 \begin{headthm}\label{main1}
 Let $\Delta$ be a simplicial complex, $R=\kk[\Delta]$ be the Stanley--Reisner ring of $\Delta$ over a field $\kk$, and assume that $R$ is Cohen--Macaulay. The following statements hold:
 \begin{itemize}
 \item[(X)]
$R$ is Gorenstein on the punctured spectrum
if and only if 
$\tr(\omega_R) = \MM_R^i$ for some $i \in \{0,1,2\}$.
 \item[(Y)] The following conditions are equivalent:
 \begin{itemize}
 \item[(1)] $\tr(\omega_R)=\MM_R$, that is, $R$ is nearly Gorenstein but not Gorenstein;
 \item[(2)] $\Delta$ is isomorphic either to a disjoint union of $n \geq 3$ vertices or to a path of length $n \geq 3$.
 \end{itemize}
 \item[(Z)] The following conditions are equivalent:
 \begin{itemize}
 \item[(1)] $\tr(\omega_{R})=\MM_{R}^2$,
 that is, $R$ is Gorenstein on the punctured spectrum but not nearly Gorenstein;
 \item[(2)] $\Delta$ is a non-$\kk$-orientable $\kk$-homology manifold.
 \end{itemize}
\end{itemize}
\end{headthm}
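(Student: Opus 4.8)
The three assertions are proved together; the engine is a combinatorial description of $\tr(\omega_R)$ combined with a classification of the complexes for which $R$ is Gorenstein on the punctured spectrum. Begin with reductions. Writing $\Delta=\operatorname{core}\Delta*\{v_1,\dots,v_m\}$ exhibits $R=\kk[\operatorname{core}\Delta][x_{v_1},\dots,x_{v_m}]$ as a polynomial extension, under which Cohen--Macaulayness and Gorensteinness behave transparently and $\tr(\omega_R)=\tr(\omega_{\kk[\operatorname{core}\Delta]})R$; moreover, inspecting the link of a cone vertex shows that if $R$ is Gorenstein on the punctured spectrum and $\Delta\neq\operatorname{core}\Delta$ then $\operatorname{core}\Delta$ is a $\kk$-homology sphere, hence $R$ is Gorenstein, and also $\tr(\omega_R)$ can be a power of $\MM_R$ only if $R$ is Gorenstein or $\Delta=\operatorname{core}\Delta$. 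Hence throughout we may assume $\Delta=\operatorname{core}\Delta$, i.e.\ every $x_i$ is a zerodivisor on $R$; the cases $\dim R\le2$ are settled in \cite{miyashita2024levelness} (they also admit short direct checks), so the new content is $\dim R=d\ge3$. The key tool is the $\ZZ^n$-graded description of the canonical module of a Cohen--Macaulay $\kk[\Delta]$: for $\aaa\in\ZZ^n_{\ge0}$ with $\supp(\aaa)=F$ one has $(\omega_R)_{\aaa}\cong\widetilde H_{d-1-|F|}(\lk_\Delta F;\kk)$, with multiplication by $x_j$ ($j\notin F$) equal to the Mayer--Vietoris connecting map $\widetilde H_{d-1-|F|}(\lk_\Delta F;\kk)\to\widetilde H_{d-2-|F|}(\lk_\Delta(F\cup\{j\});\kk)$ of the decomposition $\lk_\Delta F=\operatorname{del}_{\lk_\Delta F}(j)\cup\st_{\lk_\Delta F}(j)$, and multiplication by $x_j$ ($j\in F$) an isomorphism. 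Since $\omega_R$ and $R$ are $\ZZ^n$-graded, $\tr(\omega_R)=\omega_R\cdot\Hom_R(\omega_R,R)$ is a monomial ideal, and $V(\tr(\omega_R))$ is the non-Gorenstein locus of $\Spec R$.

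The classification is the main step, and the hard part. Because $(\kk[\Delta])_{P_F}$ is Gorenstein exactly when $\operatorname{core}(\lk_\Delta F)$ is a $\kk$-homology sphere, and links of a $\kk$-homology sphere are $\kk$-homology spheres, $R$ is Gorenstein on the punctured spectrum iff $\operatorname{core}(\lk_\Delta\{v\})$ is a $\kk$-homology sphere for every vertex $v$. We must show that, for Cohen--Macaulay $\Delta=\operatorname{core}\Delta$, this holds precisely in one of: (a) $\Delta$ is a $\kk$-homology sphere (equivalently $R$ is Gorenstein); (b) $\dim\Delta=0$ and $\Delta$ is a disjoint union of $n\ge3$ vertices; (c) $\dim\Delta=1$ and $\Delta$ is a path with $n\ge3$ edges; (d) $\dim\Delta\ge2$ and $\Delta$ is a non-$\kk$-orientable $\kk$-homology manifold without boundary. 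The heart of the matter is that for $\dim\Delta\ge2$ purity together with Reisner's criterion force every $\lk_\Delta\{v\}$ to have no cone vertex --- whence $\operatorname{core}(\lk_\Delta\{v\})=\lk_\Delta\{v\}$ is a genuine $(d-2)$-sphere and $\Delta$ is a closed $\kk$-homology manifold --- and proving the absence of a boundary requires a careful cone-vertex/peeling analysis of the links, since any attempt to introduce a boundary vertex forces a new cone vertex of $\Delta$, contradicting $\Delta=\operatorname{core}\Delta$. Once $\Delta$ is a closed $\kk$-homology manifold, Cohen--Macaulayness gives $\widetilde H_i(\Delta;\kk)=0$ for $i<d-1$, and Poincar\'e duality makes $\widetilde H_{d-1}(\Delta;\kk)$ either $\kk$ (the $\kk$-orientable case, so $\Delta$ is a $\kk$-homology sphere, (a)) or $0$ (case (d)). In dimensions $0$ and $1$ the condition degenerates to ``$\Delta$ is a vertex set'' resp.\ ``$\Delta$ is a connected graph of maximal degree $\le2$'', i.e.\ a path or a cycle, the cycle being the $\kk$-homology $1$-sphere in (a).

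Now compute $\tr(\omega_R)$ in each case. In (a), $\tr(\omega_R)=R=\MM_R^0$. In (b) and (c) one writes $\omega_R$ down explicitly --- for $n$ disjoint vertices $R=\kk[x_1,\dots,x_n]/(x_ix_j)$ and, up to shift, $\omega_R\cong(x_1-x_2,\dots,x_{n-1}-x_n)$, and the path is analogous --- and checks that $R$ is nearly Gorenstein but not Gorenstein, so $\tr(\omega_R)=\MM_R$. Case (d) is the substantial computation. Here every vertex link is a $(d-2)$-sphere and every edge link a $(d-3)$-sphere, so by the formula $\omega_R$ is minimally generated \emph{in degree one} by classes $\xi_v\in(\omega_R)_{\eee_v}\cong\widetilde H_{d-2}(\lk_\Delta\{v\};\kk)$, one per vertex, with graded relations $x_w\xi_v=0$ for non-edges $\{v,w\}$ and $x_w\xi_v=\lambda_{vw}\,x_v\xi_w$ (with $\lambda_{vw}\in\kk^\times$) for edges, the $\lambda_{vw}$ being ratios of the pertinent connecting isomorphisms and satisfying a cocycle relation on each $2$-face (this encodes the associativity of $\omega_R$). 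Thus a graded homomorphism $\varphi\colon\omega_R\to R$ is the same as a choice of $y_v:=\varphi(\xi_v)$ with $x_wy_v=0$ for non-edges and $x_wy_v=\lambda_{vw}x_vy_w$ for edges. Two observations finish the proof. First, a \emph{degree-zero} such $\varphi$ must have $y_v=c_vx_v$ with $c_w=\lambda_{vw}c_v$ along every edge, so by connectedness a nonzero one exists iff $(\lambda_{vw})$ is a coboundary, i.e.\ iff $\Delta$ is $\kk$-orientable --- which is excluded in case (d); hence $\Hom_R(\omega_R,R)$ vanishes in degrees $\le0$ and $\tr(\omega_R)\subseteq\MM_R^2$. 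Second, for each edge $\{u,w\}$ and each vertex $u$ one builds a \emph{degree-one} $\varphi$ with $\varphi(\xi_u)=x_ux_w$, respectively $x_u^2$: the required relations among genuine faces hold by the cocycle condition, and one may take $\varphi(\xi_s)=0$ for $s$ outside the closed star of $u$; hence $\MM_R^2\subseteq\tr(\omega_R)$. Therefore $\tr(\omega_R)=\MM_R^2$ in case (d).

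Finally assemble the theorem. For (X): if $R$ is Gorenstein on the punctured spectrum then, by the first two paragraphs, $\Delta$ is of type (a)--(d) (or $\Delta\neq\operatorname{core}\Delta$ with $R$ Gorenstein), so $\tr(\omega_R)\in\{R,\MM_R,\MM_R^2\}$; conversely, if $\tr(\omega_R)=\MM_R^i$ with $i\in\{0,1,2\}$ then $V(\tr(\omega_R))\subseteq\{\MM_R\}$, so the non-Gorenstein locus is contained in the closed point, i.e.\ $R$ is Gorenstein on the punctured spectrum. For (Y) and (Z): the implication $(2)\Rightarrow(1)$ is exactly the trace computation in cases (b),(c), respectively (d); and if $\tr(\omega_R)=\MM_R$ (resp.\ $\MM_R^2$) then $R$ is Gorenstein on the punctured spectrum but not Gorenstein, forcing $\Delta=\operatorname{core}\Delta$ of type (b), (c) or (d), and the value of $\tr(\omega_R)$ already computed picks out (b),(c) --- a disjoint union of $n\ge3$ vertices or a path of $n\ge3$ edges --- respectively (d) --- a non-$\kk$-orientable $\kk$-homology manifold. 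The principal obstacle is the classification in the second paragraph, and above all the absence of boundary when $\dim\Delta\ge2$; the evaluation of the trace in case (d) is the other part that needs genuine care.
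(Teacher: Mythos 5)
Your outline reproduces the architecture of the paper's proof (reduce to the case with no cone vertex, invoke the low-dimensional classification from \cite{miyashita2024levelness}, use Gr\"abe's $\ZZ^n$-graded description of $\omega_R$, and show that the remaining case is a non-$\kk$-orientable $\kk$-homology manifold with $\tr(\omega_R)=\MM_R^2$), but the two steps you yourself identify as the hard ones are asserted rather than proved, and as stated they are not quite right. The classification step is the serious gap: ``purity together with Reisner's criterion force every $\lk_\Delta\{v\}$ to have no cone vertex'' is false as a general statement, and ``any attempt to introduce a boundary vertex forces a new cone vertex of $\Delta$'' is not a one-line deduction --- the double cone over a torus (Remark \ref{Nat} of the paper) is a normal pseudomanifold with $\Delta=\operatorname{core}\Delta$ whose vertex links are not spheres, so the hypothesis of being Gorenstein on the punctured spectrum must enter in an essential way. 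The paper's Proposition \ref{prop:main1} does this via a genuinely delicate argument: assuming some $(d-1)$-face has a one-point link, it takes a minimal path in the facet adjacency graph from that face to a facet avoiding the offending vertex and derives a contradiction using the $1$-dimensional Gorenstein links $\lk_\Delta(\alpha)$; nothing in your sketch substitutes for this.

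The trace computation in case (d) also has gaps in both directions. For the upper bound $\tr(\omega_R)\subseteq\MM_R^2$ you reduce to the claim that the cocycle $(\lambda_{vw})$ is a coboundary iff $\Delta$ is $\kk$-orientable; this is plausible and would be a genuinely different route from the paper, which instead shows that a degree-zero (or $\ZZ^n$-graded degree-one) map $\phi$ with $\phi(1_{\eee_i})=x_i$ propagates along the connected complex to give $\tr(\omega_R)=\phi(\omega_R)$, contradicting the Teter-type obstruction of \cite{gasanova2022rings} (Proposition \ref{p:nearlyG=G}) --- but your claim is not proved. For the lower bound $\MM_R^2\subseteq\tr(\omega_R)$ you define homomorphisms by prescribing values on the generators $\xi_v$ and checking only the degree-two relations $x_w\xi_v=0$ and $x_w\xi_v=\lambda_{vw}x_v\xi_w$; you have not shown these generate all relations of $\omega_R$, so well-definedness of your $\varphi$ is open. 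The paper avoids this entirely by realizing $\omega_R$ as the fractional ideal $\kk(\Delta)$ and exhibiting the explicit element $x_i^2/f_i\in\kk(\Delta)^{-1}$, with $f_i=\iota(1_{\{i\}})+\sum_{k\notin\st(i)}x_k^{\dim\Delta+1}$ a nonzerodivisor (Proposition \ref{prop:main2}). The assembly of (X), (Y), (Z) from these ingredients in your last paragraph is fine, but it rests on the unproved steps above.
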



Note that the above theorem implies that, if $\kk$ has characteristic 2, a Cohen--Macaulay Stanley--Reisner ring is Gorenstein on the punctured spectrum if and only if it is nearly Gorenstein. Furthermore, in any characteristic, if $\dim(\Delta)\geq 2$ and $\kk[\Delta]$ is Gorenstein on the punctured spectrum, then either it is Gorenstein or $\Delta$ is a $\kk$-homology manifold (for this statement the Cohen--Macaulay assumption can be replaced by the weaker condition that $\Delta$ is connected, see Corollary \ref{c:puncGisHomologyMani}).

In the proof of our main result, we elucidate the relationship between level rings (\cite{stanley2007combinatorics})---one of the earliest generalizations of Gorenstein rings in the context of graded rings---and rings that are Gorenstein on the punctured spectrum.
The interplay of the class of almost Gorenstein rings~(\cite{goto2015almost})
 with the one of nearly Gorenstein rings has been explored in works such as \cite{miyashita2024levelness,miyashita2024comparing,moscariello2021nearly}. For instance, in the case of standard graded affine semigroup rings, it has been shown that the intersection of nearly Gorenstein and almost Gorenstein rings includes only a small subset of non-Gorenstein examples~(see~\cite[Theorem 6.1]{miyashita2024comparing}). Building on these insights, we establish as a consequence of this study that a similar phenomenon occurs for Stanley--Reisner rings, further enriching our understanding of these generalizations.

\begin{headthm}\label{main2}
The following statements hold:
\begin{itemize}
\item[(X)] Every Stanley--Reisner ring that is Gorenstein on the punctured spectrum is level.
\item[(Y)] Let $\Delta$ be a simplicial complex
and assume that $R=\kk[\Delta]$ is Cohen--Macaulay and not Gorenstein.
Then the following conditions are equivalent:
\begin{itemize}
\item[(1)] $R$ is Gorenstein on the punctured spectrum and almost Gorenstein;
\item[(2)] $R$ is nearly Gorenstein and almost Gorenstein;
\item[(3)] $R$ is nearly Gorenstein;
\item[(4)] $\Delta$ is isomorphic either to a disjoint union of $n \geq 3$ vertices or to a path of length $n \geq 3$.
\end{itemize}
\end{itemize}
\end{headthm}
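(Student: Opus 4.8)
The plan is to deduce Theorem~\ref{main2} from Theorem~\ref{main1} together with known structural facts about the canonical trace, almost Gorenstein rings, and level rings, rather than re-proving anything from scratch. Throughout, recall that for a Cohen--Macaulay graded ring $R$ one has $\tr(\omega_R)=R$ if and only if $R$ is Gorenstein, and $\MM_R\subseteq\tr(\omega_R)$ characterizes near Gorensteinness; also that $R$ is Gorenstein on the punctured spectrum exactly when $\tr(\omega_R)$ is $\MM_R$-primary (equivalently contains some power $\MM_R^i$).

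For part~(X), let $R=\kk[\Delta]$ be Gorenstein on the punctured spectrum. If $R$ is Gorenstein it is certainly level, so assume it is not. By Theorem~\ref{main1}(X) we then have $\tr(\omega_R)=\MM_R$ or $\tr(\omega_R)=\MM_R^2$. The idea is that in either case $\omega_R$ is generated in a single degree: levelness of a Cohen--Macaulay graded ring is precisely the statement that $\omega_R$ (equivalently, the top local cohomology module, via graded local duality) is generated in one degree, i.e. that the socle of an Artinian reduction is concentrated in one degree. In case~(Y) of Theorem~\ref{main1}, $\Delta$ is a disjoint union of vertices or a path, and $\kk[\Delta]$ is a one-dimensional reduced ring whose canonical module is easily checked to be generated in degree $0$ (indeed these are the standard ``level but not Gorenstein'' examples of dimension one). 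In case~(Z), $\Delta$ is a $\kk$-homology manifold; here Reisner's criterion, or rather the Schenzel/Gräbe description of the canonical module of a Buchsbaum (here Cohen--Macaulay) complex, shows $\omega_R$ is generated in degree $0$ as well, because the local cohomology at the irrelevant ideal of a homology manifold is as ``uniform'' as possible. So the key step is: \emph{Gorenstein on the punctured spectrum $\Rightarrow$ $\tr(\omega_R)\in\{R,\MM_R,\MM_R^2\}$ $\Rightarrow$ $\omega_R$ generated in a single degree $\Rightarrow$ level}, and I would isolate the middle implication as the technical lemma, handling the two non-Gorenstein cases by the explicit descriptions already available from the proof of Theorem~\ref{main1}.

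For part~(Y), the implications $(4)\Rightarrow(3)$, $(3)\Rightarrow(2)$, $(2)\Rightarrow(1)$, $(1)\Rightarrow(4)$ form a cycle. The implication $(1)\Rightarrow(4)$: if $R$ is Gorenstein on the punctured spectrum and not Gorenstein, Theorem~\ref{main1}(X) gives $\tr(\omega_R)=\MM_R$ or $\MM_R^2$; the additional hypothesis ``almost Gorenstein'' must rule out the $\MM_R^2$ case and force $\tr(\omega_R)=\MM_R$, whence Theorem~\ref{main1}(Y) yields~(4). So I need: an almost Gorenstein Stanley--Reisner ring that is Gorenstein on the punctured spectrum but not Gorenstein cannot be a non-$\kk$-orientable $\kk$-homology manifold. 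This should follow because almost Gorensteinness constrains the Hilbert series / $a$-invariant strongly (via the exact sequence $0\to R\to\omega_R\to C\to0$ with $C$ having minimal multiplicity), and for a homology manifold of dimension $\geq 1$ the shape of $\omega_R$ (generated in degree $0$, rank one, with $\tr(\omega_R)=\MM_R^2$ of colength computed from $h$-vector combinatorics) is incompatible unless the complex is a sphere, i.e. Gorenstein---a contradiction. The implication $(4)\Rightarrow(3)$ is just Theorem~\ref{main1}(Y) read backwards (disjoint vertices or a path $\Rightarrow$ nearly Gorenstein). For $(3)\Rightarrow(2)$: a nearly Gorenstein ring that is a disjoint union of vertices or a path is one-dimensional and these are classical almost Gorenstein rings—indeed one-dimensional reduced rings of this type are almost Gorenstein—so combine $(3)\Rightarrow(4)$ (Theorem~\ref{main1}(Y)) with the explicit verification. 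Finally $(2)\Rightarrow(1)$ is immediate since nearly Gorenstein $\Rightarrow$ Gorenstein on the punctured spectrum.

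The main obstacle I anticipate is the implication $(1)\Rightarrow(4)$ in part~(Y): showing that ``almost Gorenstein + Gorenstein on the punctured spectrum + not Gorenstein'' excludes the homology-manifold case $\tr(\omega_R)=\MM_R^2$. This is where I would need a genuine computation with Hilbert series: write the defining almost-Gorenstein sequence $0\to R\xrightarrow{\ \cdot a\ }\omega_R\to C\to 0$, note $\dim C<\dim R$ and $C$ has minimal multiplicity, and compare this with the constraint that $\omega_R$ is generated in degree $0$ and $R/\tr(\omega_R)=R/\MM_R^2$ has length $1+\dim_\kk\MM_R/\MM_R^2 = 1+n$ where $n$ is the number of vertices; for a $(d-1)$-dimensional homology manifold with $d\geq 2$ these numerical relations should be contradictory, forcing $d=1$, which lands back in part~(Y)(2) of Theorem~\ref{main1}. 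If a clean Hilbert-series argument proves elusive, the fallback is to invoke the classification of almost Gorenstein Stanley--Reisner rings in low dimension from \cite{miyashita2024levelness} directly and check case by case against the homology-manifold list. Everything else reduces to bookkeeping with Theorem~\ref{main1} and standard one-dimensional examples.
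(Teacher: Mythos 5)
Your overall skeleton matches the paper's: part (X) splits into $\dim\Delta\le 1$ (handled by the low-dimensional classification) and $\dim\Delta\ge 2$ (where Corollary~\ref{c:puncGisHomologyMani} reduces to a homology manifold and Proposition~\ref{p:normalpseudo=>level} gives generation of $\omega_R$ in a single degree), and part (Y) reduces to ruling out the homology-manifold case under the almost Gorenstein hypothesis. One factual slip in (X): for a \emph{non}-$\kk$-orientable homology manifold $\omega_R$ is generated in degree $1$, not degree $0$ (that is exactly Proposition~\ref{p:normalpseudo=>level}(2)); levelness still follows, but this error propagates into your numerics for part (Y).

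The genuine gap is the implication $(1)\Rightarrow(4)$, which you yourself flag as the main obstacle and then leave unproved. Your Hilbert-series sketch ("these numerical relations should be contradictory") is not carried out, and your fallback --- invoking the classification of almost Gorenstein Stanley--Reisner complexes in low dimension --- cannot work, because what must be excluded is precisely the \emph{high}-dimensional case ($\dim\Delta\ge 2$), which the cited low-dimensional classification does not address. The paper closes this step by a different and concrete chain: $R$ is level by part (X); a level, almost Gorenstein, non-Gorenstein graded ring has socle degree $1$ (\cite[Theorem 5.1]{miyashita2024comparing}), so its codimension equals its Cohen--Macaulay type; by Corollary~\ref{c:puncGisHomologyMani} and Proposition~\ref{p:normalpseudo=>level} the type of $\kk[\Delta]$ for a non-orientable homology manifold is $n=\dim_\kk[\omega_R]_1$, the embedding dimension; hence codimension equals embedding dimension and $\dim R=0$, contradicting $\dim\Delta\ge 2$. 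Without this (or an actually completed multiplicity computation for the Ulrich cokernel $C$ in $0\to R\to\omega_R(-a_R)\to C\to 0$), your argument does not establish $(1)\Rightarrow(4)$.
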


\subsection*{Outline}
The outline of this paper is as follows: Proposition \ref{prop:main1} and Propositions \ref{p:nearlyG=G}, \ref{prop:main2} play a fundamental role in the proof of our main theorem. In Section \ref{sect2}, we introduce several key definitions of simplicial complexes that are central to our arguments and prove Proposition \ref{prop:main1}. Then, in Section \ref{sect3}, we utilize Gr\"abe's description of canonical modules of Stanley--Reisner rings to establish Propositions \ref{p:nearlyG=G}, \ref{prop:main2}, thereby completing the proofs of the described results.

\section{Stanley--Reisner rings which are Gorenstein on the punctured spectrum}
\label{sect2}

The purpose of this section is to lay the groundwork for the discussions of our main results and to establish Proposition \ref{prop:main1}. Throughout this paper,
we denote the set of nonnegative integers by $\NN$ and the set of integers by $\ZZ$.
Moreover, for a Noetherian positively graded ring $R=\bigoplus_{i \ge 0} R_i$,
we always assume that $R_0$ is a field.
In this context, notice that every Noetherian positively graded ring $R$
has a unique graded maximal ideal
${\MM_R}$ and admits a graded canonical module $\omega_R$.
Let $a_R$ denote the $a$-invariant of $R$.
Notice that $a_R = - \min \{j : (\omega_R)_j \neq 0 \}$.
For a $\ZZ$-graded $R$-module $M$, let $\tr_R(M)$ denote the sum of ideals $\phi(M)$
with $\phi \in \Hom^*_R(M,R):=\bigoplus_{i \in \ZZ}\Hom^i_R(M,R)$ (note that $\Hom^*_R(M,R)=\Hom_R(M,R)$ if $M$ is finitely generated). That is,
\[\tr_R(M)=\sum_{\phi \in \Hom^*_R(M,R)}\phi(M)\]
where $\Hom^i_R(M,R)=\{ \phi \in \Hom_R(M,R): \phi(M_k) \subseteq R_{k+i} \text{\;for any $k \in \ZZ$} \}.$
When there is no risk of confusion about the ring we simply write $\tr(M)$.
It is known that \(\tr(\omega_R)\) describes the non-Gorenstein locus if \(R\) is Cohen--Macaulay~(see \cite[Lemma 6.19]{herzog1971canonical} or \cite[Lemma 2.1]{herzog2019trace}).
On the other hand, even if \(R\) is a Noetherian ring such that $\dim(R/\pp)=\dim(R)$ for every minimal prime ideal $\pp$ of $R$, it is known that \(\tr(\omega_R)\) describes the non-quasi-Gorenstein locus of $R$~(\cite[Corollary 3.4]{aoyama1985endomorphism}). Building on these observations, this paper also frequently addresses the canonical trace of non-Cohen--Macaulay Noetherian rings.
We say that $R$ is {\it Gorenstein on the punctured spectrum} if $R_\pp$ is Gorenstein for every graded prime ideal $\pp$ of $R$ with $\pp \neq \MM_R$.
Note that this definition does not require
$R$ itself to be Cohen--Macaulay. In the following three definitions we assume that $R$ is Cohen--Macaulay.
\begin{definition}
We say that $R$ is $\textit{nearly Gorenstein}$~{\cite{herzog2019trace}} if $\tr(\omega_R) \supseteq {\MM_R}$.
It is known that $R$ is Gorenstein on the punctured spectrum $\Spec(R) \setminus \{ \MM_R \}$
if and only if $\sqrt{\tr(\omega_R)} \supseteq \MM_R$~(see \cite[Lemma 2.1]{herzog2019trace}).
In particular, every nearly Gorenstein ring is Gorenstein on the punctured spectrum.
\end{definition}

Below, we introduce the definitions of level rings and almost Gorenstein (graded) rings, which are other generalizations of Gorenstein graded rings.

\begin{definition}
We say that $R$ is {\it level}~(\cite{stanley2007combinatorics}) if $\omega_R$ is generated in a single degree. Equivalently the socle of an Artinian reduction of $R$ is concentrated in a single degree.

We say that $R$ is {\it almost Gorenstein}~(\cite[Section 10]{goto2015almost}) if there exists an $R$-monomorphism $\phi: R \hookrightarrow \omega_R(-{a_R})$ of degree 0 such that $C:=\cok(\phi)$ is either the zero module or an Ulrich $R$-module,
i.e. $C$ is Cohen--Macaulay (which in this case is automatic) and $\mu(C) = e(C)$.
Here, $\mu(C)$ (resp. $e(C)$) denotes the number of elements in a minimal generating system of $C$ (resp. the multiplicity of $C$ with respect to $\MM_R$) and
$\omega_R(-{a_R})$ is the graded $R$-module having the same underlying $R$-module structure as $\omega_R$,
where $[\omega_R(-{a_R})]_n = [\omega_R]_{n-{a_R}}$ for all $n \in \ZZ$.
\end{definition}

The following notion plays an important role in the proof of Proposition \ref{prop:main2}.

\begin{definition}[{\cite{gasanova2022rings}}]
Suppose that $R$ is not Gorenstein.
$R$ is said to be of \textit{Teter type}
if there exists a graded $R$-homomorphism $\phi: \omega_R \to R$ such that  $\operatorname{tr}_R(\omega_R) = \phi(\omega_R)$.
It is known that if $R$ is generically Gorenstein (e.g. reduced), then $R$ is not of Teter type (see \cite[Theorem 2.1]{gasanova2022rings}).
\end{definition}


Next, we summarize the terminology related to Stanley--Reisner rings that is necessary for this paper.
Let $S=\kk[x_1,\ldots ,x_n]$ be a polynomial ring in $n$ variables over a field $\kk$ equipped with the standard grading $\deg(x_i)=1$ for any $i=1,\ldots ,n$.

Recall that, given a simplicial complex $\Delta$ on $n$ vertices, the {\it Stanley--Reisner ideal} $I_{\Delta}\subseteq S$ is the ideal generated by the monomials $x_{i_1}x_{i_2}\cdots x_{i_s}$ such that $\{i_1,\ldots ,i_s\}\notin\Delta$.
This association yields a bijection
between simplicial complexes on $n$ vertices and squarefree monomial ideals of $R$. The quotient 
$\kk[\Delta]=S/I_{\Delta}$ is called the {\it Stanley--Reisner} ring of $\Delta$. Let us remind that the dimension of $\sigma\in\Delta$ is $\dim\sigma=|\sigma|-1$ and the dimension of $\Delta$ is $\dim\Delta=\max\{\dim\sigma:\sigma\in\Delta\}$. We say that $\sigma\in\Delta$ is an {\it $i$-face} of $\Delta$ if $\dim\sigma=i$.
A face of $\Delta$ maximal by inclusion is called a {\it facet}. We write $\FF(\Delta)$ for the set of facets of $\Delta$, and say that $\Delta$ is {\it pure} if $\dim\sigma=\dim \Delta$ for any $\sigma\in\FF(\Delta)$. A simplicial complex $\Delta$ is {\it strongly connected} if, for any $\sigma,\tau\in\FF(\Delta)$ there exist $\sigma_0,\sigma_1,\sigma_2,\ldots ,\sigma_k\in\FF(\Delta)$ such that $\sigma_0=\sigma$, $\sigma_k=\tau$ and $\dim \sigma_i\cap \sigma_{i-1}=\dim\Delta-1$ for all $i=1,\ldots ,k$. Notice that a strongly connected simplicial complex is necessarily pure.
Given a face $\sigma\in\Delta$, its {\it star, costar} and {\it link} are the following simplicial complexes: 
\begin{align*}
\st_{\Delta}\sigma=\{\tau\in\Delta:\tau\cup\sigma\in\Delta\}, \\
\cost_{\Delta}\sigma=\{\tau\in\Delta:\tau\not\supseteq\sigma\}, \\
\lk_{\Delta}\sigma=\{\tau\in\Delta:\tau\cup\sigma\in\Delta \mbox{ and }\tau\cap\sigma=\emptyset\}. 
\end{align*}
Notice that $\lk_{\Delta}\sigma=\st_{\Delta}\sigma\cap \cost_{\Delta}\sigma$. A face $\tau\in\Delta$ is a {\it cone} if $\tau\in\sigma$ for any $\sigma\in\FF(\Delta)$.

A simplicial complex $\Delta$ is called {\it normal} if $\lk_{\Delta}\sigma$ is connected for any $i$-face with $i\leq \dim\Delta-2$. Since $\dim\emptyset =-1$ and $\lk_{\Delta}\emptyset =\Delta$, a normal simplicial complex of dimension at least 1 is connected. It turns out that, if $\Delta$ is normal, then $\lk_{\Delta}\sigma$ (equivalently $\st_{\Delta}\sigma$) is strongly connected for all $\sigma\in \Delta$~(see \cite[Proposition 11.7]{Bj}). In particular a normal simplicial complex is pure. 
Indeed, it is known that $\Delta$ is normal if and only if $\kk[\Delta]$ satisfies Serre's condition $(S_2)$.
We say that a $d$-dimensional simplicial complex $\Delta$ is:
\begin{itemize}
\item a {\it $\kk$-homology sphere} if
for any $\sigma \in \Delta$, we have
\[
\widetilde{H}_{i}{(\lk_\Delta(\sigma), \kk)} = 
\begin{cases}
    \kk, & \text{if } i = \dim (\lk_\Delta(\sigma)), \\
    0, & \text{otherwise}.
\end{cases} \qquad - (*)
\]
\item a {\it $\kk$-homology manifold}
(without boundary)
if $\Delta$ is connected and $(*)$ holds for any $\sigma \in \Delta$ such that $\sigma \neq \emptyset$.
\item a {\it pseudomanifold} if it is strongly connected and any $(d-1)$-face is contained in exactly two facets (which means that the equation (*) below holds when $\sigma$ is a facet of $\Delta$).
\end{itemize}
Given a simplicial complex $\Delta$, by a classical theorem of Hochster $\kk[\Delta]$ is Gorentein if and only if $\lk_\Delta(\sigma)$ is a $\kk$-homology sphere, where $\sigma$ is the cone face of $\Delta$ maximal by inclusion (e.g. see Theorem 5.6.1 of \cite{bruns1998cohen}).
As a last piece of notation, we say that a $d$-dimensional pseudomanifold $\Delta$ is {\it orientable} if $H_d(\Delta;\ZZ)\neq 0$, and that is {\it $\kk$-orientable} if $H_d(\Delta;\kk)\neq 0$.
\begin{remark}
Note that the link of a pseudomanifold might fail to be a pseudomanifold (since it might be not strongly connected, think at a pinched torus), for this reason sometimes it is more natural to consider normal pseudomanifolds.
Furthermore notice that $\Delta$ is a $\kk$-homology manifold if and only if $\lk_\Delta v$ is a homology sphere for any vertex $v$ of $\Delta$.
\end{remark}

The following is well-known, we include a proof for the convenience of the reader:

\begin{lemma}\label{l:orientability}
Let $\Delta$ be a $d$-dimensional pseudomanifold. Then:
\begin{enumerate}
\item $\Delta$ is orientable if and only if $\widetilde{H}_d(\Delta;\ZZ)\cong \ZZ$.
\item $\Delta$ is orientable if and only if $\widetilde{H}_d(\Delta;\kk)\cong \kk$.
\item The following are equivalent:
\begin{itemize}
\item[(a)] $\Delta$ is orientable.
\item[(b)] $\Delta$ is $\kk$-orientable for any field $\kk$.
\item[(c)] $\Delta$ is $\kk$-orientable for some field $\kk$ of characteristic different from 2
\end{itemize}
\item If $\kk$ has characteristic 2, $\Delta$ is $\kk$-orientable.
\end{enumerate}
\end{lemma}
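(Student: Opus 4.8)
The plan is to derive everything from the structure of the top homology of a pseudomanifold, which is controlled by a single boundary map. Fix the $d$-dimensional pseudomanifold $\Delta$ with facets $F_1,\dots,F_m$, and let $\partial_d\colon C_d(\Delta;\ZZ)\to C_{d-1}(\Delta;\ZZ)$ be the simplicial boundary map; since $\dim\Delta=d$ there are no $(d+1)$-chains, so $\widetilde H_d(\Delta;\ZZ)=\ker\partial_d$, a free abelian group. The key combinatorial input is that each $(d-1)$-face lies in exactly two facets (the pseudomanifold condition): this means that a $d$-chain $z=\sum_i a_iF_i$ lies in $\ker\partial_d$ if and only if, for every $(d-1)$-face $\tau$ contained in facets $F_i,F_j$, the two contributions cancel, i.e. $a_i=\pm a_j$ with the sign dictated by the orientations. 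Combined with strong connectivity, this forces all $|a_i|$ to be equal: starting from any facet one can propagate the value $|a_1|$ through the dual graph to every other facet. Hence $\ker\partial_d$ is either $0$ or infinite cyclic, generated by a chain with all coefficients $\pm1$. This proves (1): $\Delta$ is orientable, i.e. $\widetilde H_d(\Delta;\ZZ)\neq 0$, exactly when $\ker\partial_d\cong\ZZ$.

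For (2) and (4), I would run the same argument with $\kk$-coefficients in place of $\ZZ$: $\widetilde H_d(\Delta;\kk)=\ker(\partial_d\otimes\kk)$, and the same cancellation-and-propagation argument shows a nonzero element must have all coefficients nonzero and, up to a global scalar in $\kk^\times$, equal to $\pm1$ (the sign constraints are the same equations, now read in $\kk$). So $\ker(\partial_d\otimes\kk)$ is either $0$ or one-dimensional over $\kk$, giving (2): $\Delta$ is $\kk$-orientable iff $\widetilde H_d(\Delta;\kk)\cong\kk$. When $\mathrm{char}\,\kk=2$ the sign constraints $a_i=\pm a_j$ become $a_i=a_j$, which are always consistent; taking all $a_i=1$ produces a nonzero cycle, so $\widetilde H_d(\Delta;\kk)\cong\kk\neq 0$ and $\Delta$ is $\kk$-orientable — this is (4). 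The cleanest way to package (2) is via the universal coefficient theorem: $\widetilde H_d(\Delta;\kk)\cong(\widetilde H_d(\Delta;\ZZ)\otimes\kk)\oplus\mathrm{Tor}(\widetilde H_{d-1}(\Delta;\ZZ),\kk)$, together with the fact just established that $\widetilde H_d(\Delta;\ZZ)$ is $0$ or $\ZZ$ (so torsion-free), so the $\mathrm{Tor}$ term and the rank-counting both behave well; but one must be a little careful because $\widetilde H_{d-1}$ can have torsion.

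For (3), the implications (b)$\Rightarrow$(c) is trivial, and (a)$\Rightarrow$(b) follows from (1)+(2): if $\widetilde H_d(\Delta;\ZZ)\cong\ZZ$ then by universal coefficients $\widetilde H_d(\Delta;\kk)$ contains $\ZZ\otimes\kk=\kk\neq0$ for every field $\kk$. The substantive implication is (c)$\Rightarrow$(a): suppose $\widetilde H_d(\Delta;\kk)\neq 0$ for some $\kk$ with $\mathrm{char}\,\kk=p\neq 2$ (including $p=0$), and we must show $\widetilde H_d(\Delta;\ZZ)\neq 0$. Here I use the explicit description: a nonzero class in $\widetilde H_d(\Delta;\kk)$ is represented by $\sum_i a_iF_i$ with every $a_i\in\kk^\times$, and the sign constraints coming from the $(d-1)$-faces say $a_i=-\varepsilon_{ij}a_j$ for signs $\varepsilon_{ij}\in\{\pm1\}$ read in $\kk$. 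Since $1\neq -1$ in $\kk$ (as $p\neq 2$), propagating from $F_1$ gives a well-defined sign $\eta_i\in\{\pm1\}\subseteq\ZZ$ with $a_i=\eta_i a_1$ — the consistency of these signs around every "cycle" of facets holds because the analogous equations hold in $\kk$ and the two ambient copies of $\{\pm1\}$ are distinguished in $\kk$. Then $z=\sum_i\eta_i F_i\in C_d(\Delta;\ZZ)$ satisfies $\partial_d z=0$ (each $(d-1)$-face contribution cancels by construction), so $\widetilde H_d(\Delta;\ZZ)\neq 0$, whence $\Delta$ is orientable. The one point requiring care — and I expect it to be the main obstacle to write cleanly — is making rigorous the "propagation of signs around cycles is consistent" step: the right framework is the dual graph of $\Delta$ (vertices = facets, edges = shared ridges, edge-labels $\varepsilon_{ij}=\pm1$ from a fixed orientation of each facet), where the existence of a nowhere-zero cycle over $\kk$ with $\mathrm{char}\neq2$ is equivalent to the edge-labeling being "balanced" (every closed walk has label-product $+1$), which is a $\ZZ$-statement independent of $\kk$; once phrased this way, (a), (b), (c) are all equivalent to balancedness, and (4) says that over characteristic $2$ balancedness is vacuous. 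I would state this dual-graph reformulation as the spine of the proof and let (1)–(4) fall out of it.
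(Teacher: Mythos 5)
Your proposal is correct and follows essentially the same route as the paper: identify $\widetilde{H}_d(\Delta;R)$ with the kernel of the top boundary map, use the pseudomanifold condition plus strong connectivity to propagate coefficients and conclude the kernel is either $0$ or cyclic generated by a chain with all coefficients $\pm 1$ (giving (1), (2), and, via the all-ones cycle, (4)), and use universal coefficients for (a)$\Rightarrow$(b). The only point of divergence is (c)$\Rightarrow$(a), where you correctly flag that ``each contribution cancels by construction'' is not automatic (the propagation only controls ridges along a spanning tree of the dual graph) and propose a dual-graph balancedness framework to repair it. The paper's fix is lighter: take the $\pm 1$-coefficient cycle $w$ produced over $\kk$, regard it as an integral chain, and note that every coefficient of $\partial(w)$ over $\ZZ$ lies in $\{-2,0,+2\}$ because each ridge lies in exactly two facets; since these coefficients vanish in $\kk$ and $2\neq 0$ there, they vanish in $\ZZ$, so $w$ is already an integral cycle. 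This sidesteps any global consistency check, so the balancedness machinery you anticipate as ``the main obstacle'' is unnecessary. (Minor remark: item (2) of the lemma should read ``$\kk$-orientable,'' as you implicitly assumed.)
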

\begin{proof}
If $d\leq 0$ the statement is obvious, so we assume $d\geq 1$, in which case, for any unitary commutative ring $R$, $\widetilde{H}_d(\Delta;R)\cong H_d(\Delta;R)$. For all $i=-1,\ldots ,d$, let $F_i(R)$ be the free $R$-module with basis $1_{\sigma}$ where $\sigma$ is an $i$-face of $\Delta$. Then $H_d(\Delta;R)$ is the kernel of the map of $R$-modules $\partial:F_d(R)\rightarrow F_{d-1}(R)$  defined by $\partial(1_{\sigma})=\sum_{k=0}^d(-1)^k1_{\sigma\setminus\{a_k\}}$ where $\sigma=\{a_0<\ldots <a_d\}\in\Delta$.
Consider a nonzero cycle
\[v=\sum_{\alpha\in\FF(\Delta)}a_{\alpha}1_{\alpha}\in \ker(\partial)\subseteq F_d(R),\] 
where $a_{\alpha}\in R$. Since $v\neq 0$, there exists $\alpha\in \FF(\Delta)$ such that $a:=a_{\alpha}\neq 0$. We want to show that there exists a function $s:\FF(\Delta)\to \{+1,-1\}$ such that $a_{\beta}=s(\beta)a$ for all $\beta\in \FF(\Delta)$. Once this is proved, we have that $v=aw$ where $w=\sum_{\beta\in\FF(\Delta)}s(\beta)1_{\beta}\in \ker(\partial)\subseteq F_d(R)$, so $\ker(\partial)$ is the cyclic $R$-module $wR\subset F_d(R)$.

Put $X_1=\{\alpha\}$ and assume to having constructed a set $X_r$ consisting of $r$ facets of $\Delta$ such that the function $s$ is defined on $X_r$ and $a_{\gamma}=s(\gamma)a$ for all $\gamma\in X_r$. If $X_r=\FF(\Delta)$ we are done. Otherwise, let $\beta=\{j_0,\ldots ,j_d\}\in\FF(\Delta)\setminus X_r$ be such that there is $\gamma=\{i_0,\ldots ,i_d\}\in X_r$ with $\gamma\cap\beta=\gamma\setminus\{i_k\}=\beta\setminus\{j_h\}=:\xi$ (such a $\beta$ exists because $\Delta$ is strongly connected). Since $\gamma$ and $\beta$ are the only two facets of $\Delta$ containing $\xi$, if $\partial(v)=\sum_{\mu}b_{\mu}1_{\mu}$ (where the summation runs over the $(d-1)$-faces $\mu\in\Delta$), then $b_{\xi}=(-1)^k s(\gamma)a+(-1)^h a_{\beta}$. Since $\partial(v)= 0$, $b_{\xi}$ must be 0, i.e. $a_{\beta}=(-1)^{k+h+1}s(\gamma)a$. So we put $s(\beta)=(-1)^{k+h+1}s(\gamma)$ and set $X_{r+1}=X_r\cup \{\beta\}$. Since $\Delta$ is strongly connected, continuing this way we conclude.

It follows that, for any unitary commutative ring $R$, $\widetilde{H}_d(\Delta;R)\neq 0$ if and only if $\widetilde{H}_d(\Delta;R)\cong R$. So (1) and (2) follow. As for (4), it is obvious that $\sum_{\alpha\in\FF(\Delta)}1_{\alpha}$ is a nonzero element of $\ker(\partial)=\widetilde{H}_d(\Delta;\kk)$ if $\kk$ has characteristic 2.

For (3), by the universal coefficient theorem $H_d(\Delta;\ZZ)\otimes_{\ZZ}\kk$ injects into $H_d(\Delta;\kk)$, so $(a)\Rightarrow (b)$ follows. The implication $(b)\Rightarrow (c)$ is obvious. For $(c)\Rightarrow (a)$, consider $w=\sum_{\beta\in\FF(\Delta)}s(\beta)1_{\beta}$ in $\ker(\partial)=H_d(\Delta;\kk)\subseteq F_d(\kk)$ described above. We can consider $w$ as an element of $F_d(\ZZ)$, and doing so we have that $\partial(w)=\sum_{\mu}b_{\mu}1_{\mu}$ (where the summation runs over the $(d-1)$-faces $\mu\in\Delta$) where $b_{\mu}\in\{-2,0,+2\}$ for all $\mu$ since $\Delta$ is a pseudomanifold. Because $\partial(w)=0$ in $F_d(\kk)=F_d(\ZZ)\otimes_{\ZZ}\kk$ and $\pm 2 \neq 0$ in $\kk$, we must have $b_{\mu}=0 \ \forall \ \mu$, so $\partial(w)=0$ in $F_{d-1}(\ZZ)$ and $0\neq w\in H_d(\Delta;\ZZ)$.
\end{proof}


\begin{lemma}\label{lem:pseudoimplieshom}
Let $\Delta$ be a $d$-dimensional connected simplicial complex and assume that $\kk[\Delta]$ is Gorenstein on the punctured spectrum.
If $\Delta$ is a pseudomanifold, then $\Delta$ is a $\kk$-homology manifold.
\end{lemma}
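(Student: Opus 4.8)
The plan is to prove that $\lk_\Delta v$ is a $\kk$-homology sphere for every vertex $v$ of $\Delta$; since $\Delta$ is assumed connected, the remark preceding Lemma~\ref{l:orientability} then gives that $\Delta$ is a $\kk$-homology manifold. Fixing a vertex $v$, I would first translate the ring-theoretic hypothesis into information about $\lk_\Delta v$. The graded prime ideals of $R=\kk[\Delta]$ are the ideals $\pp_\sigma=(x_i:i\notin\sigma)$ with $\sigma\in\Delta$, ordered by $\pp_\sigma\subseteq\pp_\tau\iff\tau\subseteq\sigma$; in particular the graded primes properly contained in $\MM_R=\pp_\emptyset$ that are maximal as such are exactly the $\pp_{\{v\}}$, so that $R$ being Gorenstein on the punctured spectrum is equivalent to $R_{\pp_{\{v\}}}$ being Gorenstein for every vertex $v$. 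Using the classical description of these localizations — namely that $R_{\pp_{\{v\}}}$ is a localization of a polynomial extension of $\kk[\lk_\Delta v]$, together with the fact that Gorensteinness of a positively graded ring with $R_0$ a field is detected at its irrelevant maximal ideal — this in turn becomes: $\kk[\lk_\Delta v]$ is Gorenstein for every vertex $v$. Now I would apply Hochster's theorem (\cite[Theorem~5.6.1]{bruns1998cohen}) to the complex $\lk_\Delta v$: it says that $\lk_{\lk_\Delta v}(\tau_v)$ is a $\kk$-homology sphere, where $\tau_v$ denotes the cone face of $\lk_\Delta v$ maximal by inclusion. Hence everything reduces to the combinatorial claim that $\tau_v=\emptyset$, for then $\lk_\Delta v=\lk_{\lk_\Delta v}(\emptyset)$ is itself a $\kk$-homology sphere.

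To prove $\tau_v=\emptyset$, I would argue by contradiction. Suppose $\tau_v\neq\emptyset$ and pick a vertex $w\in\tau_v$; then $w\neq v$. Being a pseudomanifold, $\Delta$ is strongly connected, hence pure, so every facet of $\Delta$ has exactly $d+1$ vertices; moreover the facets of $\lk_\Delta v$ are precisely the sets $F\setminus\{v\}$ with $F\in\FF(\Delta)$ and $v\in F$. Since $w$ lies in the cone face $\tau_v$, it lies in every facet of $\lk_\Delta v$; equivalently, every facet of $\Delta$ containing $v$ also contains $w$. Pick any $\gamma\in\FF(\Delta)$ with $v\in\gamma$; then $w\in\gamma$, and as $v\neq w$ the set $\gamma\setminus\{w\}$ is a $(d-1)$-face of $\Delta$ still containing $v$. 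By the pseudomanifold condition, $\gamma\setminus\{w\}$ lies in exactly two facets of $\Delta$; one of them is $\gamma$, and the other, say $\gamma'$, must be $(\gamma\setminus\{w\})\cup\{w'\}$ with $w'\notin\gamma$ — in particular $w'\neq w$, since otherwise $\gamma'=\gamma$. But then $\gamma'$ is a facet of $\Delta$ with $v\in\gamma'$ and $w\notin\gamma'$, contradicting the previous sentence. Therefore $\tau_v=\emptyset$, and the proof is complete.

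The step I expect to carry the real content is the first reduction — the equivalence between "$\kk[\Delta]$ is Gorenstein on the punctured spectrum" and "$\kk[\lk_\Delta v]$ is Gorenstein for every vertex $v$": this is classical, but one must be careful about exactly which multiplicative set gets inverted when localizing a Stanley--Reisner ring at a face prime, and about the graded-local reduction of Gorensteinness. By contrast the combinatorial part is short — the two pseudomanifold axioms (strong connectedness, which yields purity, and the condition that each ridge lies in exactly two facets) are precisely what is needed to forbid a cone point in a vertex link — and the final assembly is immediate from Hochster's theorem and the remark cited above.
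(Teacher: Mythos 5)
Your proposal is correct and follows essentially the same route as the paper's proof: both reduce, via Hochster's criterion, to the observation that a Gorenstein-but-not-sphere link forces a cone point in some link, and both derive the contradiction from the pseudomanifold condition that each $(d-1)$-face lies in exactly two facets (the paper argues contrapositively with an arbitrary nonempty face $\sigma$, exhibiting a ridge contained in only one facet, while you argue directly on vertex links and invoke the remark reducing the homology-manifold condition to vertex links). One small caveat: your parenthetical claim that the graded primes maximal among those properly contained in $\MM_R$ are exactly the $\pp_{\{v\}}$ is false (non-monomial graded primes such as the image of $(x_1-x_2,x_3,\dots,x_n)$ can occur), but this only touches the converse implication you never use — the direction you need, that the hypothesis forces $R_{\pp_{\{v\}}}$ and hence $\kk[\lk_\Delta v]$ to be Gorenstein, is immediate.
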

\begin{proof}
We will show the contrapositive.
Suppose that $\Delta$ is not a $\kk$-homology manifold. That is, there exists $\sigma \in \Delta$ such that $\sigma \neq \emptyset$ and $\lk_\Delta(\sigma)$ is not a $\kk$-homology sphere. Since $\kk[\lk_\Delta(\sigma)]$ is Gorenstein, there exists a cone point $v$ of $\lk_\Delta(\sigma)$.
We fix $\tau \in \FF(\lk_\Delta(\sigma))$ and define $\pi := \tau \cup \sigma \in \FF(\Delta)$ (note that $v\in\tau\subset \pi$). We then define $\tau' := \pi \setminus \{v\}$.
Note that $\dim(\tau') = d-1$. To show that $\Delta$ is not a pseudomanifold, it suffices to prove that there is exactly one facet of $\Delta$ containing $\tau'$, namely $\pi$.
To this goal, take any facet $\alpha \in \FF(\Delta)$ such that $\alpha \supset \tau'$. Since $v \notin \sigma$, then $\tau'\supset \sigma$. It follows that $\alpha \supset \sigma$.
Consequently $\alpha \setminus \sigma \in \FF(\lk_\Delta(\sigma))$, so $v \in \alpha$ and thus $\alpha \supseteq \tau' \cup \{v\} = \pi$.
Because $\pi \in \FF(\Delta)$, this implies $\alpha = \pi$.
\end{proof}

\begin{remark}\label{Nat}
In general, a pseudomanifold, even if it is normal, is not necessarily a $\kk$-homology manifold.
In fact, there exists a three-dimensional example $\Delta$ as follows:
Let $T$ be the minimal triangulation of the 2-dimensional torus with 7 vertices.
Define $\Delta$ as a simplicial complex on the vertex set $[9]$ by $\Delta = (T * \{8\}) \cup (T * \{9\})$.
Then it follows that $\Delta$ is a normal pseudomanifold but not a $\kk$-homology manifold because $\lk_\Delta(8)=T$ is not a $\kk$-homology sphere.
\end{remark}

The following is the first proposition that plays a crucial role in proving the main result.

\begin{proposition}\label{prop:main1}
Let $\Delta$ be a $d$-dimensional connected simplicial complex.
If $\kk[\Delta]$ is Gorenstein on the punctured spectrum, then one of the following must hold:
\begin{itemize}
\item[(a)] $d \leq 1$;
\item[(b)] There exists a cone point of $\Delta$;
\item[(c)] $\Delta$ is a $\kk$-homology manifold.
\end{itemize}
\end{proposition}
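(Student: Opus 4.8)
The plan is to reduce to the case where $\Delta$ is a pseudomanifold, and then invoke Lemma \ref{lem:pseudoimplieshom}. So assume $d \geq 2$ and that $\Delta$ has no cone point; we must show $\Delta$ is a $\kk$-homology manifold. The first step is to observe that being Gorenstein on the punctured spectrum forces strong structural constraints via Hochster's theorem: for every nonempty face $\sigma \in \Delta$, the localization of $\kk[\Delta]$ at the prime $\pp_\sigma$ corresponding to the complement of $\sigma$ is (up to localization/polynomial extension) essentially $\kk[\lk_\Delta(\sigma)]$, hence $\kk[\lk_\Delta(\sigma)]$ must be Gorenstein. By Hochster's criterion quoted in the excerpt, this means $\lk_\Delta(\sigma)$ has a maximal cone face $\tau_\sigma$ such that $\lk_{\lk_\Delta(\sigma)}(\tau_\sigma)$ is a $\kk$-homology sphere; equivalently $\lk_\Delta(\sigma)$ is the join of a simplex with a $\kk$-homology sphere. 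I would record this as the key local statement.

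Next I would establish purity and strong connectedness. Since $\kk[\Delta]$ is Gorenstein on the punctured spectrum and $\dim \Delta = d \geq 2$, in particular it satisfies $(S_2)$ on the punctured spectrum, which should give that $\Delta$ is normal — here one has to be slightly careful because $(S_2)$ for the whole ring is what corresponds to normality, but connectedness of $\Delta$ together with the Gorenstein-on-punctured-spectrum hypothesis and $d\ge 2$ should still yield that $\lk_\Delta(\sigma)$ is connected for all faces $\sigma$ of dimension $\le d-2$ (a non-connected link of small dimension would produce a non-$(S_2)$, hence non-Gorenstein, localization at a non-maximal prime). From normality we get that $\Delta$ is pure and strongly connected, and moreover every link $\lk_\Delta(\sigma)$ is strongly connected by \cite[Proposition 11.7]{Bj}.

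It then remains to check the pseudomanifold condition: every $(d-1)$-face $\tau$ is contained in exactly two facets. Fix such a $\tau$ and pick a vertex $v \in \tau$; set $\sigma = \tau \setminus \{v\}$, a face of dimension $d-2 \geq 0$. By the local statement, $\kk[\lk_\Delta(\sigma)]$ is Gorenstein of dimension $d - (d-2) = 2$, i.e. a $1$-dimensional Gorenstein complex, which is either a single edge, a path, or a cycle; since its links-of-vertices must all be $\kk$-homology spheres of dimension $0$ (two points) except possibly at a cone point, and $\lk_{\lk_\Delta(\sigma)}(\{v\}\setminus\sigma)=\lk_\Delta(\tau)$, the vertex $v$ of $\lk_\Delta(\sigma)$ lies in exactly two edges unless it is a cone point of $\lk_\Delta(\sigma)$. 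The case that $v$ is a cone point of $\lk_\Delta(\sigma)$ is exactly the obstruction, and this is where the hypothesis that $\Delta$ has no cone point must be fed back in: I would argue that if every such local vertex were a cone point, one could propagate this (using strong connectedness and the join description of the links) to produce a global cone point of $\Delta$, a contradiction. Concretely, a cone point of $\lk_\Delta(\sigma)$ corresponds, via the join decomposition of nearby links, to a vertex lying in every facet through $\sigma$; piecing these together along the strongly connected facet graph forces a common vertex in all facets of $\Delta$. Once $v$ is not a cone point, $\lk_\Delta(\tau)$ consists of exactly two points, so $\tau$ lies in exactly two facets of $\Delta$. Hence $\Delta$ is a pseudomanifold, and Lemma \ref{lem:pseudoimplieshom} finishes the proof.

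The main obstacle I anticipate is precisely the bookkeeping in the last step: ruling out the ``cone point of a small link'' scenario and showing it can only occur when $\Delta$ itself has a cone point. The join-with-a-homology-sphere description of Gorenstein links, combined with strong connectedness to glue local cone points into a global one, is the crux; getting the induction/propagation clean (rather than hand-wavy) is the delicate part, and I would spend most of the write-up there.
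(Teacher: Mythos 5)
Your overall strategy coincides with the paper's: assume $d\geq 2$ and that $\Delta$ has no cone point, establish normality and strong connectedness, reduce to showing $\Delta$ is a pseudomanifold, and then invoke Lemma \ref{lem:pseudoimplieshom}; the analysis via the classification of $1$-dimensional Gorenstein complexes (single edge, two-edge path, or cycle) is also the right tool. However, the step you yourself flag as the crux is a genuine gap, and the sketch you give for it does not work as stated. If $\tau$ is a $(d-1)$-face lying in only one facet and $\sigma=\tau\setminus\{v\}$, then $\lk_\Delta(\sigma)$ is a cone with some cone point $x$ --- note that $x$ need not be $v$: in the two-edge path case $v$ is an endpoint, not the apex, so your dichotomy ``$v$ lies in exactly two edges unless it is a cone point'' is already inaccurate. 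More seriously, knowing that $x$ lies in every facet containing $\sigma$ says nothing about facets not containing $\sigma$, and the proposed ``piecing together along the strongly connected facet graph'' fails precisely at a step where a facet containing $x$ is adjacent to one obtained by dropping $x$; strong connectedness alone cannot rule this out, and different $(d-2)$-faces may have different cone points or none at all. As you note, this is where the whole difficulty lies, but no actual argument is supplied.

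The paper closes this gap with a minimal-distance argument rather than a propagation. Since $x$ is not a cone point of $\Delta$, the set $X$ of facets avoiding $x$ is nonempty; one takes a shortest facet-path $\pi=\sigma_0,\sigma_1,\ldots,\sigma_\delta$ from $\pi=\tau\cup\{x\}$ to $X$ (so $\delta\geq 2$ and $x\in\sigma_i$ for $i<\delta$), and considers the $(d-2)$-face $\alpha=\sigma_\delta\cap\sigma_{\delta-1}\cap\sigma_{\delta-2}$. Then $\lk_\Delta(\alpha)$ contains a three-edge path through $x$, hence, being $1$-dimensional Gorenstein, must be a cycle; traversing the cycle the other way around $x$ yields a facet of $X$ adjacent to $\sigma_{\delta-2}$, shortening the path and giving a contradiction. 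This rerouting step is exactly what your sketch is missing; without it (or an equivalent argument) the proof is incomplete.
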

\begin{proof}
First of all, notice that, since $\Delta$ is connected and Gorenstein on the punctured spectrum, it is normal, and hence strongly connected. Assume that $d \ge 2$ and there is no cone point of $\Delta$. 
We show (c). By Lemma \ref{lem:pseudoimplieshom} it is enough to show that $\Delta$ is a pseudomanifold. 
Suppose the contrary. Then there exists a face $\tau \in \Delta$ such that $\dim(\tau)=d-1$ and $\lk_\Delta(\tau)$ does not consist of two distinct vertices. Since $\lk_\Delta(\tau)$ is Gorenstein, it must be a single vertex $x$ of $\Delta$.
Put $\pi=\tau \cup \{ x \}$, $X:=\{ \sigma \in \FF(\Delta) : x \notin \sigma\}$
and
\[
\delta = \dist(\tau, X) := \min \left\{
\begin{array}{l}
k \in \mathbb{N} : \text{there exists a sequence } \pi=\sigma_0, \sigma_1, \cdots, \sigma_k \text{ such that } \\
\sigma_k \in X, \; \sigma_i \in \mathcal{F}_d(\Delta)  \text{\; and \;} |\sigma_i \cap \sigma_{i+1}| = d - 1 \text{ for any } i = 0, \dots, k-1
\end{array}
\right\}.
\]

Observe that $X \neq \emptyset$, otherwise $x$ would be a cone point of $\Delta$.
Moreover, we have $\delta$ is finite because $\Delta$ is strongly connected.
Furthermore, we have $\delta \geq 2$; otherwise, $\lk_\Delta(\tau)$ would contain more than a single point.

Take the face $\alpha = \sigma_{\delta} \cap \sigma_{\delta-1} \cap \sigma_{\delta-2}$.  
Note that $\dim \alpha = d-2$.  There must exist three vertices $u,v,w$ of $\Delta$ such that:
\[
\sigma_{\delta} = \alpha \cup \{v\} \cup \{w\}, \quad  
\sigma_{\delta-1} = \alpha \cup \{w\} \cup \{x\}, \quad \text{and} \quad  
\sigma_{\delta-2} = \alpha \cup \{x\} \cup \{u\}.
\]

Then, $\lk_\Delta(\alpha)$ contains a path $\{\{v, w\}, \{w, x\}, \{x, u\}\}$.  
Since $\lk_\Delta(\alpha)$ is a 1-dimensional Gorenstein simplicial complex that includes $\{\{v, w\}, \{w, x\}, \{x, u\}\}$, it must form a cycle. Consequently, there exists a path $\{u, y_1, \dots, y_k, v\}$ in $\lk_\Delta(\alpha)$ connecting $v$ and $u$ such that $y_i\notin\{x,w\}$ for all $i=1,\ldots ,k$. Then $\xi=\alpha\cup \{u,y_1\}$ is a facet of $\Delta$ belonging to $X$, and $\sigma_0,\sigma_1,\ldots ,\sigma_{\delta-2},\sigma_{\delta-1}=\xi$ is such that $\sigma_{\delta-1} \in X, \; \sigma_i \in \mathcal{F}_d(\Delta)  \text{\; and \;} |\sigma_i \cap \sigma_{i+1}| = d - 1 \text{ for any } i = 0, \dots, \delta-2$, contradicting the minimality of $\delta$.
\end{proof}

\begin{corollary}\label{c:puncGisHomologyMani}
Let $\Delta$ be a connected simplicial complex with $\dim(\Delta) \ge 2$.
Assume that $\kk[\Delta]$ is not Gorenstein.
If $\kk[\Delta]$ is Gorenstein on the punctured spectrum, then $\Delta$ is a $\kk$-homology manifold.
\end{corollary}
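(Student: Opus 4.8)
The plan is to deduce Corollary~\ref{c:puncGisHomologyMani} directly from Proposition~\ref{prop:main1} by ruling out cases (a) and (b). We are given that $\Delta$ is connected, $\dim(\Delta) = d \geq 2$, that $\kk[\Delta]$ is Gorenstein on the punctured spectrum, and that $\kk[\Delta]$ is \emph{not} Gorenstein. Since $d \geq 2$, case (a) of Proposition~\ref{prop:main1} is immediately excluded, so we are left with cases (b) and (c), and it suffices to show that (b) cannot occur; then (c) gives the claim.

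So suppose toward a contradiction that $\Delta$ has a cone point. First I would argue that the set of cone faces of $\Delta$ has a maximal element $\sigma$ (the intersection of all facets, which is a face since any nonempty intersection of facets is contained in every facet and hence is a face; here it is nonempty because there is a cone point). By Hochster's criterion recalled in the excerpt — $\kk[\Delta]$ is Gorenstein if and only if $\lk_\Delta(\sigma)$ is a $\kk$-homology sphere, where $\sigma$ is the maximal cone face — the failure of Gorensteinness means $\lk_\Delta(\sigma)$ is not a $\kk$-homology sphere. The point to exploit is that $\lk_\Delta(\sigma)$ has no cone point at all (otherwise $\sigma$ together with that point would be a larger cone face of $\Delta$, contradicting maximality). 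On the other hand, passing to the link of a cone face does not change the Stanley--Reisner ring up to a polynomial extension: $\kk[\Delta] \cong \kk[\lk_\Delta(\sigma)][x_1,\dots,x_{|\sigma|}]$, so $\kk[\lk_\Delta(\sigma)]$ is again Gorenstein on the punctured spectrum (being Gorenstein on the punctured spectrum is inherited under removing a polynomial variable, and conversely), and $\lk_\Delta(\sigma)$ is connected. Moreover $\dim(\lk_\Delta(\sigma)) = d - |\sigma| \geq 1$ whenever $\sigma$ is a proper cone face; I need to handle the dimension carefully.

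The key dichotomy is on $e := \dim(\lk_\Delta(\sigma))$. If $e \geq 2$, apply Proposition~\ref{prop:main1} to $\lk_\Delta(\sigma)$: it is connected, Gorenstein on the punctured spectrum, has dimension $\geq 2$, and has no cone point, so it must be a $\kk$-homology manifold; but a $\kk$-homology manifold of dimension $\geq 1$ with no cone point — applied to the link of $\emptyset$, which is the whole complex — is not quite a homology sphere yet, so instead I would note that a connected $\kk$-homology manifold $\Lambda$ of dimension $e \geq 1$ which is \emph{not} a $\kk$-homology sphere is exactly the obstruction, and here $\lk_\Delta(\sigma)$ being non-Gorenstein with no cone point means its maximal cone face is $\emptyset$ and $\lk_\Delta(\emptyset) = \lk_\Delta(\sigma)$ itself is not a homology sphere — consistent, so this case does not immediately contradict. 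I therefore need a sharper argument: I would instead observe that if $\sigma$ is the maximal cone face, then $\lk_\Delta(\sigma)$ is a connected complex with no cone point that is Gorenstein on the punctured spectrum but not Gorenstein, i.e.\ we have reduced to the case where $\Delta$ itself has no cone point — which is exactly the hypothesis needed to invoke Proposition~\ref{prop:main1} and land in case (c). This reduction shows the cone case collapses to the no-cone case (in lower dimension, or same dimension with $\sigma=\emptyset$), and one then checks $\dim \lk_\Delta(\sigma)\ge 2$ still holds: if it dropped to $\le 1$, $\kk[\lk_\Delta(\sigma)]$ would be a $1$-dimensional Cohen--Macaulay reduced ring that is Gorenstein on the punctured spectrum, hence Gorenstein (all localizations at non-maximal primes Gorenstein forces Gorenstein in dimension $\le 1$ for reduced rings — a one-dimensional reduced ring Gorenstein on the punctured spectrum is Gorenstein), contradicting non-Gorensteinness.

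The main obstacle is precisely this bookkeeping about the cone face: making rigorous that ``$\Delta$ has a cone point'' can be reduced to ``$\Delta$ has no cone point'' by replacing $\Delta$ with $\lk_\Delta(\sigma)$ for $\sigma$ the maximal cone face, while simultaneously tracking that connectedness, the Gorenstein-on-the-punctured-spectrum property, non-Gorensteinness, and the dimension bound $\dim \geq 2$ all survive the reduction. Once that is in place, Proposition~\ref{prop:main1} applied to $\lk_\Delta(\sigma)$ (now coneless, connected, of dimension $\geq 2$, Gorenstein on the punctured spectrum) forces it to be a $\kk$-homology manifold, and then unwinding — $\lk_\Delta(\tau)$ for $\tau$ a face of $\Delta$ is a join of $\lk_{\lk_\Delta(\sigma)}(\tau')$ with a simplex, hence a homology sphere exactly when the corresponding link in $\lk_\Delta(\sigma)$ is — yields that $\Delta$ itself is a $\kk$-homology manifold, contradicting the presence of a cone point (a cone point of $\Delta$ would make $\lk_\Delta(v)$ for that vertex $v$ a cone, not a homology sphere). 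Hence case (b) is impossible and case (c) holds, completing the proof.
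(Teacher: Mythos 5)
Your overall plan (invoke Proposition~\ref{prop:main1}, discard (a) by the dimension hypothesis, and derive a contradiction from (b)) is the paper's plan, but your exclusion of case (b) does not go through, and it misses the one observation that makes it trivial. If $v$ is a cone point, then $\{v\}$ is a face, so the corresponding prime $P_{\{v\}}=(x_i : i\neq v)$ of $\kk[\Delta]$ is a \emph{non-maximal} graded prime; Gorensteinness on the punctured spectrum at this prime is equivalent to $\kk[\lk_\Delta(v)]$ being Gorenstein, and then $\kk[\Delta]\cong\kk[\lk_\Delta(v)][x_v]$ is Gorenstein, directly contradicting the hypothesis. That is the paper's whole argument. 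You do note that $\kk[\Delta]$ is a polynomial extension of $\kk[\lk_\Delta(\sigma)]$, but you only extract that the link is again Gorenstein on the punctured spectrum, never that it is actually Gorenstein --- which is what the hypothesis gives you for free, because the extended maximal ideal of the link sits inside the punctured spectrum of $\kk[\Delta]$.

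Two of the steps you substitute for this are false. First, a one-dimensional reduced ring that is Gorenstein on the punctured spectrum need not be Gorenstein: $\kk[x,y,z]/(xy,yz,zx)$ (three disjoint vertices) is reduced and one-dimensional, every localization at a minimal prime is a field, yet its Cohen--Macaulay type is $2$. So your disposal of the case $\dim\lk_\Delta(\sigma)\le 1$ fails. Second, the ``unwinding'' is wrong: for a nonempty cone face $\sigma$, the link $\lk_\Delta(\tau)$ of a face $\tau$ not containing all of $\sigma$ is a join with a nonempty simplex, hence a cone, hence acyclic and \emph{never} a $\kk$-homology sphere; it is not ``a homology sphere exactly when the corresponding link in $\lk_\Delta(\sigma)$ is.'' Consequently you cannot conclude that $\Delta$ is a $\kk$-homology manifold from $\lk_\Delta(\sigma)$ being one, and the final ``contradiction with the presence of a cone point'' is derived from a statement that is itself false, so the argument is circular rather than a proof. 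Replacing all of this with the localization observation above repairs the proof and reduces it to three lines.
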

\begin{proof}
By Proposition \ref{prop:main1}
it is enough to show that there is no cone point of $\Delta$.
Assume that there is a cone point $v$ of $\Delta$.
Since $\kk[\Delta]$ is Gorenstein on the punctured spectrum, it follows that $\kk[\lk_\Delta(v)]$ is Gorenstein. Consequently, $R \cong \kk[\lk_\Delta(v)][v]$ is Gorenstein. This yields a contradiction.
\end{proof}
\section{The canonical trace of Stanley--Reisner rings of homology manifolds}\label{sect3}
In this section, we prove Propoition \ref{prop:main2} by using Gr\"abe's structure theorem of canonical modules  and then complete the proofs of our two main theorems, Theorem \ref{main1} and Theorem \ref{main2}.
First we recall Gr\"abe's structure theorem for canonical modules of Stanley--Reisner rings.

Let $S=\kk[x_1,\dots,x_n]$ be the polynomial ring over a field $\kk$.
Let $\ee_i \in \ZZ^n$ be the $i$th unit vector of $\ZZ^n$.
We consider the $\ZZ^n$-grading of $S$ defined by $\deg x_i = \ee_i$.
For a $\ZZ^n$-graded $S$-module $M$ and for $\aaa=(a_1,\dots,a_n) \in \ZZ^n$,
let $M_\aaa$ be the graded component of $M$ of degree $\aaa$
and let $s(\aaa)=\{i \in [n]: a_i \ne 0\}$.

Fix a simplicial complex $\Delta$ of dimension $d \geq 1$. Given a face $\sigma\in\Delta$, one can check (e.g. see \cite{grabe1984canonical}) that:
\[\widetilde{H}_{i-|\sigma|}(\lk_{\Delta}(\sigma);\kk)\cong H_i(\Delta,\cost_\Delta(\sigma);\kk),\]
where on the right hand side we have the relative homology of the pair $(\Delta,\cost_\Delta(\sigma))$. Gr\"abe tells us that there is an isomorphism of $\kk$-vector spaces:
\begin{align*}
\omega_{K[\Delta]} \cong \bigoplus_{\aaa \in \NN^n,\ s(\aaa) \in \Delta} H_{d}\big(\Delta,\cost_\Delta\big(s(\aaa) \big) \big),
\end{align*}
He also tells us which $\kk[\Delta]$-module structure put on the right hand side so that the above isomorphism becomes an isomorphism of $\kk[\Delta]$-modules: if $\tau\subseteq \sigma\in\Delta$ then $\cost_\Delta(\tau)\subseteq \cost_\Delta(\sigma)$, so there is a natural map between relative homologies $\imath^* : H_{d}(\Delta,\cost_\Delta(\tau)) \to H_{d}(\Delta,\cost_\Delta(\sigma))$.



\begin{theorem}[{Gr\"abe \cite{grabe1984canonical}}]\label{t:Grabe}
There exists a following isomorphism as $\ZZ^n$-graded $\kk[\Delta]$-module.
\begin{align}
\label{2-1}
\omega_{K[\Delta]} \cong \bigoplus_{\aaa \in \NN^n,\ s(\aaa) \in \Delta} H_{d}\big(\Delta,\cost_\Delta\big(s(\aaa) \big) \big),
\end{align}
where the $\aaa$th graded component of the right-hand side is $H_{d}(\Delta,\cost_\Delta(s(\aaa)))$.
Moreover, the multiplication structure of the right-hand side determined as follows.
The multiplication of $x_l$ from the $\aaa$th component of the right-hand side of \eqref{2-1} to its $(\aaa+\ee_l)$th component is the following map
\begin{align*} 
\small
\begin{cases}
\mbox{$0$-map}, & \mbox{if }s(\aaa +\ee_l) \not \in \Delta,\\
\mbox{identity map}, & \mbox{if } l \in s(\aaa) \in \Delta,\\
\imath^* : H_{d}(\Delta,\cost_\Delta(s(\aaa ))) \to H_{d}(\Delta,\cost_\Delta(s(\aaa+\ee_l))),&
\mbox{otherwise}.
\end{cases}
\end{align*}
\end{theorem}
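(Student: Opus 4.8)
The plan is to derive Theorem~\ref{t:Grabe} from Hochster's formula for the fine-graded local cohomology of $R:=\kk[\Delta]$ together with graded local duality. Note $\dim R=d+1$. Regardless of whether $R$ is Cohen--Macaulay, graded local duality over the polynomial ring $S$ identifies the canonical module with the graded Matlis dual of the top local cohomology module: there is an isomorphism of $\ZZ^n$-graded $R$-modules
\[
\omega_R\;\cong\;{}^{*}\Hom_\kk\!\big(H^{d+1}_\mm(R),\,\kk\big),
\]
where the $R$-action on the right is the one dual to multiplication on $H^{d+1}_\mm(R)$. So it suffices to compute $H^{d+1}_\mm(R)$ as a $\ZZ^n$-graded module, together with its multiplication maps, and then dualize degree by degree.

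First I would invoke Hochster's formula in its fine-graded form (see, e.g., \cite[Theorem~5.3.8]{bruns1998cohen}): for $\aaa\in\ZZ^n$, $H^{d+1}_\mm(R)_\aaa=0$ unless $a_i\le 0$ for all $i$ and $\sigma:=\{\,i:a_i<0\,\}\in\Delta$, in which case
\[
H^{d+1}_\mm(R)_\aaa\;\cong\;\widetilde H^{\,d-|\sigma|}\big(\lk_\Delta(\sigma);\,\kk\big).
\]
Dualizing, $(\omega_R)_{\bbb}=\big(H^{d+1}_\mm(R)_{-\bbb}\big)^{\vee}$ vanishes unless $\bbb\in\NN^n$ and $s(\bbb)\in\Delta$; and in that case, since over a field reduced simplicial cohomology is the $\kk$-dual of reduced simplicial homology,
\[
(\omega_R)_{\bbb}\;\cong\;\widetilde H_{\,d-|s(\bbb)|}\big(\lk_\Delta(s(\bbb));\,\kk\big)\;\cong\;H_d\big(\Delta,\cost_\Delta(s(\bbb));\,\kk\big),
\]
the last step being the isomorphism recalled just before the statement, applied with $i=d$ and $\sigma=s(\bbb)$. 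This already gives the $\ZZ^n$-graded $\kk$-vector space decomposition \eqref{2-1} with the asserted graded pieces.

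Next I would identify the multiplication maps. Multiplication by $x_l$ on $\omega_R$ from the $\aaa$th to the $(\aaa+\ee_l)$th component is, under the duality above, the dual of multiplication by $x_l$ on $H^{d+1}_\mm(R)$ from degree $-\aaa-\ee_l$ to degree $-\aaa$. Reading this last map off the \v{C}ech complex that computes local cohomology, and transporting it through the isomorphisms with reduced cohomology of links used in Hochster's formula, one gets three cases: it is the zero map when $s(\aaa+\ee_l)\notin\Delta$ (the source component is then $0$); it is the identity when $l\in s(\aaa)$, since then $s(\aaa+\ee_l)=s(\aaa)$ and the relevant link is unchanged; and otherwise $s(\aaa+\ee_l)=s(\aaa)\cup\{l\}$ is a strictly larger face, and the map is the one induced on reduced cohomology by the inclusion $\lk_\Delta(s(\aaa)\cup\{l\})\hookrightarrow\lk_\Delta(s(\aaa))$. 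Dualizing back, and using the inclusion $\cost_\Delta(s(\aaa))\subseteq\cost_\Delta(s(\aaa)\cup\{l\})$ together with the naturality of $\widetilde H_{i-|\sigma|}(\lk_\Delta\sigma)\cong H_i(\Delta,\cost_\Delta\sigma)$, the third case turns into exactly the natural map $\imath^{*}:H_d(\Delta,\cost_\Delta(s(\aaa)))\to H_d(\Delta,\cost_\Delta(s(\aaa+\ee_l)))$, while the identity and zero cases are clearly preserved. This is precisely the multiplication table in the statement.

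The step I expect to be the main obstacle is this last one: matching the connecting maps of the \v{C}ech complex with the topological maps on links and costars while carefully tracking the degree reversal introduced by Matlis duality and the sign conventions of the $\ZZ^n$-grading, so that the cohomology pullback along a face inclusion becomes exactly the asserted $\imath^{*}$. Everything else I would simply quote: Hochster's formula, graded local duality without the Cohen--Macaulay hypothesis, and the isomorphism $\widetilde H_{i-|\sigma|}(\lk_\Delta\sigma)\cong H_i(\Delta,\cost_\Delta\sigma)$. An alternative that avoids local duality is Gr\"abe's original route in \cite{grabe1984canonical}, computing $\omega_R=\Ext^{\,n-d-1}_S(R,S(-\fone))$ from a free resolution of $R$ over $S$, but it requires the same bookkeeping of module maps.
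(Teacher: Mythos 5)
The paper does not actually prove this statement: it is quoted verbatim as Gr\"abe's structure theorem and attributed to \cite{grabe1984canonical}, so there is no in-paper argument to compare yours against. Judged on its own, your route---fine-graded local duality $\omega_R\cong{}^{*}\Hom_\kk(H^{d+1}_\mm(R),\kk)$ over $S$ (valid without the Cohen--Macaulay hypothesis, since $\omega_R:=\Ext^{n-d-1}_S(R,S(-\fone))$), followed by Hochster's $\ZZ^n$-graded formula $H^{d+1}_\mm(R)_{-\aaa}\cong\widetilde H^{\,d-|s(\aaa)|}(\lk_\Delta(s(\aaa));\kk)$ and the identification $\widetilde H_{d-|\sigma|}(\lk_\Delta\sigma;\kk)\cong H_d(\Delta,\cost_\Delta\sigma;\kk)$ recalled just before the theorem---is the standard modern derivation and is sound; the three cases of the multiplication table come out exactly as you describe (zero target when $s(\aaa+\ee_l)\notin\Delta$, unchanged support when $l\in s(\aaa)$, and the map induced by $\cost_\Delta(s(\aaa))\subseteq\cost_\Delta(s(\aaa)\cup\{l\})$ otherwise). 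This differs from Gr\"abe's original argument, which computes $\Ext^{n-d-1}_S(R,S(-\fone))$ directly from a resolution; your version buys a cleaner conceptual picture at the cost of the duality bookkeeping. The one place your write-up is a sketch rather than a proof is the step you yourself flag: verifying that the \v{C}ech-complex multiplication, transported through Hochster's isomorphism and Matlis duality, really becomes $\imath^*$ on relative homology (naturality of Hochster's isomorphism in the face $\sigma$ is exactly what is needed, and it is true but must be checked). Since the theorem is being cited rather than reproved, leaving that verification to the literature is acceptable, but be aware it is the entire content of the ``moreover'' clause.
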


In \cite{grabe1984canonical}, Gr\"abe describes an embedding of the canonical module into $R=\kk[\Delta]$:
first of all, $H_d(\Delta, \cost_\Delta \sigma)$ can be embedded into $R$ by
\[
\overline{\sum a_\tau 1_{\tau}} \in H_d(\Delta, \text{cost}_\Delta \sigma)
\longmapsto \sum a_\tau x_\tau x_\sigma \in R.
\]
Let $\kk(\Delta)$ be the ideal of $R$ generated by the images of all $H_d(\Delta, \cost_\Delta \sigma)$, where
$\sigma \in \Delta$.
Then $\omega_R$ is isomorphic to $\kk(\Delta)$ as a $\ZZ$-graded $R$-module up to degree shift by \cite[Theorem 5]{grabe1984canonical}.
Note that $\kk(\Delta)$ contains a nonzero divisor because $R$ is generically Gorenstein~(see \cite[Chapter I,~Theorem 12.9]{stanley2007combinatorics}).

We will need the following to prove various results in this section:
\begin{lemma}\label{l:folklore}
Let $\Delta$ be a $d$-dimensional normal pseudomanifold. We have:
\begin{enumerate}
\item If $\sigma\in\Delta$, $\dim_\kk H_d(\Delta ,\cost_{\Delta}(\sigma);\kk)\leq 1$.
\item If $\tau\subseteq \sigma\in\Delta$ and $H_d(\Delta ,\cost_{\Delta}(\tau);\kk)\neq 0$, then $\iota^*:H_d(\Delta ,\cost_{\Delta}(\tau);\kk)\to H_d(\Delta ,\cost_{\Delta}(\sigma);\kk)$ is an isomorphism. In particular $H_d(\Delta ,\cost_{\Delta}(\sigma);\kk)\cong H_d(\Delta ,\cost_{\Delta}(\tau);\kk)\cong \kk$.
\end{enumerate}
\end{lemma}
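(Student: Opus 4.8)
The plan is to handle part~(1) via links and part~(2) at the chain level. Two structural observations will be used throughout. Since $\Delta$ is normal, $\lk_\Delta(\sigma)$ is strongly connected for every $\sigma\in\Delta$ by \cite[Proposition 11.7]{Bj}, hence pure, necessarily of dimension $d-|\sigma|$; and if $\sigma$ is not a facet, then $\lk_\Delta(\sigma)$ is itself a $(d-|\sigma|)$-dimensional pseudomanifold, because its ridges are exactly the sets $\rho$ with $\rho\cup\sigma$ a $(d-1)$-face of $\Delta$, the facets of $\lk_\Delta(\sigma)$ through such a $\rho$ correspond bijectively to the facets of $\Delta$ through $\rho\cup\sigma$, and there are exactly two of the latter since $\Delta$ is a pseudomanifold. (If $\sigma$ is a facet then $\lk_\Delta(\sigma)=\{\emptyset\}$ and both parts of the lemma are trivial.) Granting this, part~(1) is immediate: by the identification $H_d(\Delta,\cost_\Delta(\sigma);\kk)\cong\widetilde H_{d-|\sigma|}(\lk_\Delta(\sigma);\kk)$ recalled before Theorem~\ref{t:Grabe}, it suffices to bound the top reduced homology of the pseudomanifold $\lk_\Delta(\sigma)$; and the argument in the proof of Lemma~\ref{l:orientability} shows that the top homology of any pseudomanifold with coefficients in a commutative ring $R$ is a cyclic $R$-module, which over a field has dimension at most $1$.

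For part~(2), by functoriality of the maps $\iota^*$ induced by inclusions of pairs, together with part~(1), it is enough to prove the statement when $\sigma=\tau\cup\{v\}$ with $v\notin\tau$; the general case follows by composing along a chain $\tau=\tau_0\subsetneq\cdots\subsetneq\tau_m=\sigma$ adding one vertex at a time, each intermediate module remaining nonzero (indeed $\cong\kk$) by the single-vertex case. I would then pass to chains. Since $\dim\Delta=d$ we have $C_{d+1}(\Delta)=0$, and a direct computation identifies $H_d(\Delta,\cost_\Delta(\eta);\kk)$ with $Z_\eta/W_\eta$ for every face $\eta$, where $Z_\eta\subseteq C_d(\Delta;\kk)$ consists of the top chains whose boundary has zero coefficient on every $(d-1)$-face containing $\eta$, and $W_\eta$ is the span of the $1_F$ with $F$ a facet not containing $\eta$ (one checks $W_\eta\subseteq Z_\eta$). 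Since $\tau\subseteq\sigma$ gives $Z_\tau\subseteq Z_\sigma$ and $W_\tau\subseteq W_\sigma$, under this identification $\iota^*\colon Z_\tau/W_\tau\to Z_\sigma/W_\sigma$ is ``include, then project''; hence $\iota^*$ is injective exactly when $Z_\tau\cap W_\sigma\subseteq W_\tau$, and, granting injectivity, part~(1) forces $\iota^*$ to be an isomorphism with both modules isomorphic to $\kk$.

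The real content is this injectivity. Let $z=\sum_F a_F 1_F\in Z_\tau$ with $z\notin W_\tau$; I must show $z\notin W_\sigma$. Set $\mathcal G=\{F\in\FF(\Delta):F\supseteq\tau,\ a_F\neq 0\}$, which is nonempty since $z\notin W_\tau$. If $F_1\in\mathcal G$ and $F_2\supseteq\tau$ is a facet sharing a $(d-1)$-face $\mu$ with $F_1$, then $\mu\supseteq\tau$, and since $\Delta$ is a pseudomanifold $F_1,F_2$ are the only facets containing $\mu$; as $z\in Z_\tau$, the coefficient of $1_\mu$ in $\partial z$, equal to $\pm a_{F_1}\pm a_{F_2}$, vanishes, so $a_{F_2}\neq 0$ and $F_2\in\mathcal G$. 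Thus, among the facets of $\Delta$ containing $\tau$, the set $\mathcal G$ is closed under the relation ``share a common $(d-1)$-face''; since $\Delta$ is normal, $\lk_\Delta(\tau)$ is strongly connected, which is precisely the statement that this relation connects all facets of $\Delta$ containing $\tau$, so $\mathcal G=\{F\in\FF(\Delta):F\supseteq\tau\}$. Finally, as $\sigma\in\Delta$, some facet $F^\ast$ of $\Delta$ contains $\sigma\supseteq\tau$, whence $a_{F^\ast}\neq 0$; but $F^\ast\supseteq\sigma$, so $z\notin W_\sigma$, as required.

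The main obstacle is this last step: arranging the bookkeeping so that the pseudomanifold condition (each $(d-1)$-face in exactly two facets) and the strong connectivity of $\lk_\Delta(\tau)$ combine to rigidify the support of a relative top cycle through $\tau$. Once the chain-level description $H_d(\Delta,\cost_\Delta(\eta))\cong Z_\eta/W_\eta$ and the link identification feeding part~(1) are in place, everything else is formal.
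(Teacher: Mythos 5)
Your proof is correct. Part~(1) follows the paper exactly: reduce to the top reduced homology of the pseudomanifold $\lk_\Delta(\sigma)$ via the identification $H_d(\Delta,\cost_\Delta(\sigma);\kk)\cong\widetilde H_{d-|\sigma|}(\lk_\Delta(\sigma);\kk)$ and invoke the cyclicity argument from the proof of Lemma~\ref{l:orientability}; your verification that links of a normal pseudomanifold are again pseudomanifolds is the same implicit step the paper takes. For part~(2) you diverge: the paper simply quotes Gr\"abe's Hauptlemma~3.2 of \cite{grabe1984quasimanifold}, which asserts injectivity of $\iota^*$ for connected quasi-manifolds, and then concludes with part~(1), whereas you reprove the injectivity from scratch. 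Your chain-level description $H_d(\Delta,\cost_\Delta(\eta);\kk)\cong Z_\eta/W_\eta$ is the correct computation of the relative homology (using $C_{d+1}(\Delta)=0$ and $W_\eta=C_d(\cost_\Delta(\eta);\kk)$), the propagation of nonvanishing coefficients along adjacent facets containing $\tau$ uses the pseudomanifold condition exactly as in the orientability lemma, and strong connectivity of $\lk_\Delta(\tau)$ (from normality, or from the pseudomanifold definition when $\tau=\emptyset$) is precisely the statement that the facets of $\Delta$ through $\tau$ are connected by ridges through $\tau$; the final step, that some facet contains $\sigma$ and hence carries a nonzero coefficient, gives $z\notin W_\sigma$. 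The reduction to the single-vertex case is harmless but unnecessary, since your argument handles arbitrary $\tau\subseteq\sigma$ directly. What your route buys is a self-contained proof that does not depend on Gr\"abe's quasi-manifold paper; what the paper's route buys is brevity and an appeal to a statement valid in the slightly broader setting of connected quasi-manifolds.
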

\begin{proof}
Point (1) follows at once from Lemma \ref{l:orientability}: Indeed, since $\Delta$ is a normal pseudomanifold, $\lk_{\Delta}(\sigma)$ is a pseudomanifold (of dimension $d-|\sigma|$), so $H_d(\Delta,\cost_\Delta(\sigma);\kk)\cong \widetilde{H}_{d-|\sigma|}(\lk_{\Delta}(\sigma);\kk)$ is either 0 or $\kk$.

As for (2), Gr\"abe proved that $\iota^*$ is injective for all $\tau\subseteq \sigma\in\Delta$ whenever $\Delta$ is a connected quasi-manifold, i.e. a normal simplicial complex such that any face of dimension $\dim\Delta-1$ is contained in at most two facets, see Hauptlemma 3.2 in \cite{grabe1984quasimanifold}. Since a normal pseudomanifold is a connected quasi-manifold, in the situation (2), the map $\iota^*$ is an injective map of $\kk$-vector spaces, so we conclude using (1).
\end{proof}

Given a simplicial complex $\Delta$ on $n$ vertices of dimension $d\geq 1$, call $\min\subseteq \NN^n$ the set of minimal vectors (componentwise) $\aaa\in\NN^n$ such that $[\omega_{\kk[\Delta]}]_{\aaa}\neq 0$, and define $\omega_{\kk[\Delta]}^{\min}$ be the $\kk[\Delta]$-submodule of $\omega_{\kk[\Delta]}$ generated by  $[\omega_{\kk[\Delta]}]_{\aaa}$ where $\aaa\in\min$.  
\begin{proposition}\label{p:normalpseudo=>level}
Let $\Delta$ be a $d$-dimensional normal pseudomanifold and $R=\kk[\Delta]$. Then $\omega_{R}^{\min}=\omega_{R}$. In particular, if $\kk$ has characteristic 2, $\omega_{R}=R[\omega_{R}]_0\cong R$.

Considering the $\ZZ$-grading on $\omega_{R}$, if $\kk$ has characteristic $\neq 2$ and $\Delta$ is a $\kk$-homology manifold, 
\begin{enumerate}
\item $\Delta$ is orientable if and only if $\omega_{R}=R[\omega_{R}]_0\cong R$.
\item $\Delta$ is not orientable if and only if $\omega_{R}=R[\omega_{R}]_1$.
\end{enumerate}
\end{proposition}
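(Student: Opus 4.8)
The plan is to read everything off Gr\"abe's structure theorem (Theorem~\ref{t:Grabe}) together with Lemma~\ref{l:folklore}; throughout I take $d\ge 1$, as in the rest of this section. These inputs give: $[\omega_R]_{\aaa}=0$ unless $\aaa\in\NN^n$ and $s(\aaa)\in\Delta$, in which case $[\omega_R]_{\aaa}\cong H_d(\Delta,\cost_\Delta(s(\aaa)))\cong\widetilde H_{d-|s(\aaa)|}(\lk_\Delta(s(\aaa));\kk)$; multiplication by $x_l$ from the $\aaa$-th to the $(\aaa+\ee_l)$-th graded piece is the zero map (if $s(\aaa+\ee_l)\notin\Delta$), the identity (if $l\in s(\aaa)$), or the comparison map $\iota^*$ (otherwise); and, because $\Delta$ is a normal pseudomanifold, every $[\omega_R]_{\aaa}$ is $0$ or $\kk$ and $\iota^*\colon H_d(\Delta,\cost_\Delta\tau)\to H_d(\Delta,\cost_\Delta\sigma)$ is an isomorphism whenever $\tau\subseteq\sigma$ with nonzero source. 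The organizing remark I would record first concerns the shape of the support: setting $\ee_\sigma:=\sum_{i\in\sigma}\ee_i$ and $\Sigma:=\{\sigma\in\Delta:H_d(\Delta,\cost_\Delta\sigma)\neq 0\}$, Lemma~\ref{l:folklore}(2) shows $\Sigma$ is closed under passing to supersets inside $\Delta$, and a short check (a componentwise-minimal $\aaa$ with $[\omega_R]_{\aaa}\neq 0$ must be a $\{0,1\}$-vector with $s(\aaa)$ minimal in $\Sigma$) identifies $\min$ with $\{\ee_\sigma:\sigma\ \text{minimal in}\ \Sigma\}$.

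To prove $\omega_R^{\min}=\omega_R$ I would fix $\aaa\in\NN^n$ with $[\omega_R]_{\aaa}\neq 0$, pick $\bbb\in\min$ with $\bbb\le\aaa$ (any $\le$-minimal element of the finite nonempty set $\{\cc\le\aaa:[\omega_R]_{\cc}\neq 0\}$ is automatically globally minimal), and climb from $[\omega_R]_{\bbb}$ to $[\omega_R]_{\aaa}$ by multiplying by the variables composing the monomial $x^{\aaa-\bbb}$, one at a time. Every intermediate degree $\cc$ satisfies $\bbb\le\cc\le\aaa$, hence $s(\bbb)\subseteq s(\cc)\subseteq s(\aaa)$; in particular $s(\cc)\cup\{l\}\subseteq s(\aaa)\in\Delta$ whenever we multiply by $x_l$, so the zero-map case never occurs, and $s(\cc)\supseteq s(\bbb)\in\Sigma$ forces every $\iota^*$ we meet to have nonzero source and hence to be an isomorphism by Lemma~\ref{l:folklore}(2). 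Therefore multiplication by $x^{\aaa-\bbb}$ maps $[\omega_R]_{\bbb}$ onto $[\omega_R]_{\aaa}$, so $[\omega_R]_{\aaa}\subseteq\omega_R^{\min}$; since $\aaa$ was arbitrary, $\omega_R=\omega_R^{\min}$. I expect this to be the only delicate point: the argument rests on never meeting a zero multiplication on the path from $\bbb$ to $\aaa$, which is exactly what the quasi-manifold injectivity built into Lemma~\ref{l:folklore}(2) guarantees.

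It then remains to locate $\min$, using $[\omega_R]_0=[\omega_R]_{\mathbf{0}}=H_d(\Delta,\cost_\Delta(\emptyset))\cong\widetilde H_d(\Delta;\kk)$ (a $\kk$-space of dimension $\le 1$ by Lemma~\ref{l:folklore}(1)) and $[\omega_R]_1=\bigoplus_{i=1}^{n}[\omega_R]_{\ee_i}$. Suppose first $\widetilde H_d(\Delta;\kk)\neq 0$, i.e.\ $\emptyset\in\Sigma$; then $\Sigma=\Delta$ (as $\emptyset\subseteq\sigma$ for every $\sigma$), so $\min=\{\mathbf{0}\}$ and $\omega_R=R[\omega_R]_0$ with $[\omega_R]_0\cong\kk$, i.e.\ $\omega_R$ is generated by one element in degree $0$; being also faithful (it is isomorphic to the ideal $\kk(\Delta)$ of $R$, which contains a nonzerodivisor), $\omega_R\cong R$. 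When $\kk$ has characteristic $2$ this always applies, since then $\Delta$ is $\kk$-orientable (Lemma~\ref{l:orientability}(4)) and so $\widetilde H_d(\Delta;\kk)\neq 0$; this settles the second assertion. Now let $\operatorname{char}\kk\neq 2$ and let $\Delta$ be a $\kk$-homology manifold. For every nonempty $\sigma\in\Delta$ the link $\lk_\Delta\sigma$ is a $\kk$-homology sphere, so $[\omega_R]_{\ee_\sigma}\cong\widetilde H_{d-|\sigma|}(\lk_\Delta\sigma;\kk)\cong\kk$; hence $\Sigma\supseteq\Delta\setminus\{\emptyset\}$, with $\emptyset\in\Sigma$ exactly when $\widetilde H_d(\Delta;\kk)\neq 0$, which by Lemma~\ref{l:orientability}(3) (using $\operatorname{char}\kk\neq 2$) happens exactly when $\Delta$ is orientable. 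Thus, if $\Delta$ is orientable the case treated above gives $\omega_R=R[\omega_R]_0\cong R$; conversely $\omega_R=R[\omega_R]_0$ forces $[\omega_R]_0\neq 0$, hence $\widetilde H_d(\Delta;\kk)\neq 0$, hence $\Delta$ orientable. And if $\Delta$ is not orientable then $[\omega_R]_0=\widetilde H_d(\Delta;\kk)=0$, so $\emptyset\notin\Sigma$, the minimal faces of $\Sigma$ are precisely the vertices, $\min=\{\ee_1,\dots,\ee_n\}$, and consequently $\omega_R=\omega_R^{\min}=R[\omega_R]_1$; conversely $\omega_R=R[\omega_R]_1$ forces $[\omega_R]_0=0$ (a module generated in degree $1$ has vanishing degree-$0$ part), so $\widetilde H_d(\Delta;\kk)=0$ and $\Delta$ is not orientable.
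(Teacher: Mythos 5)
Your proof is correct and follows exactly the route the paper intends: the paper's own proof simply cites Gr\"abe's structure theorem together with Lemmas \ref{l:orientability} and \ref{l:folklore} (plus the observation that $\min=\{\eee_1,\dots,\eee_n\}$ in the non-orientable case), and your write-up supplies precisely the details behind that citation, including the key ``climbing'' step where Lemma \ref{l:folklore}(2) rules out any zero multiplication map between a minimal degree and an arbitrary degree of the support.
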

\begin{proof}
This follows at once by the structure theorem of Gr\"abe, Lemma \ref{l:orientability} and Lemma \ref{l:folklore}.
For the second part, note that if $\Delta$ is non-orientable and the field $\kk$ has characteristic different from 2, we have $\min = \{\eee_i : i = 1, \ldots, n\}$.
\end{proof}


\begin{proposition}\label{p:nearlyG=G}
Let $\Delta$ be a non-$\kk$-orientable normal pseudomanifold.
Assume that $\kk[\Delta]$ is Cohen--Macaulay.
Then we have $\tr(\omega_{\kk[\Delta]})\subseteq \MM_{\kk[\Delta]}^2$.
\end{proposition}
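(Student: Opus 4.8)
The plan is the following. Since $\Delta$ is non-$\kk$-orientable, $\kk$ has characteristic $\neq 2$ by Lemma \ref{l:orientability}(4), and by Hochster's criterion recalled above $R:=\kk[\Delta]$ is not Gorenstein: the only possible cone face of a pseudomanifold is $\emptyset$, and $\lk_\Delta\emptyset=\Delta$ is not a $\kk$-homology sphere (a $\kk$-homology sphere being $\kk$-orientable). Hence $\tr(\omega_R)\neq R$; since $\omega_R$ is $\ZZ^n$-graded by Theorem \ref{t:Grabe}, so is $\Hom^*_R(\omega_R,R)$ and therefore $\tr(\omega_R)$, which is thus a monomial ideal contained in $\MM_R$. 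A monomial ideal inside $\MM_R$ is contained in $\MM_R^2$ exactly when it contains no variable, so it suffices to prove $x_i\notin\tr(\omega_R)$ for every vertex $i$. I would argue by contradiction, assuming $x_i\in\tr(\omega_R)$.

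Writing $x_i=\phi(m)$ for a graded $\phi\in\Hom_R(\omega_R,R)$ and $m\in\omega_R$ and passing to $\ZZ^n$-homogeneous components, I would reduce to the following situation. By Theorem \ref{t:Grabe} together with Lemma \ref{l:folklore}, every $\ZZ^n$-graded piece of $\omega_R$ is at most one-dimensional, it is nonzero precisely on the set of faces $\sigma\in\Delta$ with $H_d(\Delta,\cost_\Delta\sigma;\kk)\cong\widetilde{H}_{d-|\sigma|}(\lk_\Delta\sigma;\kk)\neq 0$ (a set upward closed in $\Delta$ that, since $\Delta=\lk_\Delta\emptyset$ is non-$\kk$-orientable, does not contain $\emptyset$), and the multiplication maps between two nonzero pieces are isomorphisms. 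Thus every minimal generator of $\omega_R$ sits in $\ZZ^n$-degree $\mathbf{1}_\sigma$ for a face $\sigma$ with $|\sigma|\geq 1$ that is minimal with $\lk_\Delta\sigma$ being $\kk$-orientable (a normal pseudomanifold of dimension $e:=d-|\sigma|$). A generator sent by $\phi$ to a unit would force $\tr(\omega_R)=R$, which is excluded; hence I may assume that $\phi$ is $\ZZ^n$-homogeneous of degree $\eee_i-\mathbf{1}_\sigma$ and sends a minimal generator $w$ living in degree $\mathbf{1}_\sigma$ to $x_i$.

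Next I would extract combinatorial constraints from $R$-linearity of $\phi$. Evaluating $\phi$ on $x_\ell w$ for every vertex $\ell$, and using that $x_ax_b=0$ in $R$ iff $\{a,b\}\notin\Delta$ while $x_i\neq 0$, one gets $i\in\verto(\st_\Delta\sigma)$ and $N_\Delta(i)\subseteq\verto(\st_\Delta\sigma)=\sigma\sqcup\verto(\lk_\Delta\sigma)$; iterating through faces of $\st_\Delta\sigma$ and using strong connectivity of $\Delta$ shows moreover that $\phi$ is completely determined by $\phi(w)=x_i$, being multiplication by $x^{\eee_i-\mathbf{1}_\sigma}$ on the submodule generated by $w$ and then transported to the other minimal generators of $\omega_R$ by the multiplication isomorphisms of Theorem \ref{t:Grabe}. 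Well-definedness of this $\phi$ as an $R$-homomorphism therefore amounts to a coherent choice of scalars over all of $\omega_R$, that is, to a trivialization of the $\pm 1$-valued $1$-cocycle recording the $R$-module structure of $\omega_R$.

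The main obstacle is to close the argument by ruling out such a trivialization, and there are two ingredients. First, a base case: if $|\sigma|\geq d$, then for $v\in\sigma$ the face $\tau:=\sigma\setminus\{v\}$ has $\lk_\Delta\tau$ a $1$-dimensional normal pseudomanifold, hence a cycle, hence $\kk$-orientable, so $H_d(\Delta,\cost_\Delta\tau;\kk)=\widetilde{H}_1(\lk_\Delta\tau;\kk)\neq 0$, contradicting minimality of $\sigma$; in particular the statement is vacuous for $d\leq 1$ (a connected $1$-dimensional pseudomanifold is a $\kk$-orientable cycle), so one may assume $d\geq 2$ and $1\leq e\leq d-1$. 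Second — the heart of the matter — the sign cocycle attached to $\omega_R$ is nontrivial exactly because $\Delta$ is non-$\kk$-orientable: this is what underlies Proposition \ref{p:normalpseudo=>level} (stating $\omega_R\cong R$ iff $\Delta$ is $\kk$-orientable, and $\omega_R=R[\omega_R]_1$ in the non-orientable $\kk$-homology manifold case), and it is read off from Lemma \ref{l:orientability} applied to $\Delta$ and to its links, using that $\kk[\lk_\Delta\rho]$ is Gorenstein for every nonempty $\rho\in\Delta$ so that the links entering the cocycle computation are $\kk$-homology spheres and hence $\kk$-orientable. A homomorphism $\phi$ with $\phi(w)=x_i\neq 0$ would, via the constraints above, supply a coherent scalar assignment trivializing this cocycle after propagation through the strongly connected complex $\Delta$ — a contradiction. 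Hence no variable lies in $\tr(\omega_R)$, and $\tr(\omega_R)\subseteq\MM_R^2$.
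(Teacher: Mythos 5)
Your reduction runs parallel to the paper's: both pass to the $\ZZ^n$-grading, observe that it suffices to exclude every variable from $\tr(\omega_R)$, produce a multigraded $\phi$ sending a minimal generator of $\omega_R$ to $x_i$, and propagate the resulting scalar constraints through the connected complex so that $\phi(1_{\eee_k})$ becomes a nonzero multiple of $x_k$ for every vertex $k$. The divergence --- and the gap --- is in how the contradiction is then reached. The paper's endgame is short and complete: once $\phi(1_{\eee_k})=c_kx_k$ with all $c_k\neq 0$ and $\omega_R$ is generated in degree $1$ (Proposition \ref{p:normalpseudo=>level}), one gets $\tr(\omega_R)=\phi(\omega_R)=\MM_R$, so $R$ is of Teter type, which is impossible for a reduced (hence generically Gorenstein) ring by \cite[Theorem 2.1]{gasanova2022rings}. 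You never invoke this, and your substitute --- that a coherent choice of the scalars would ``trivialize the sign cocycle'' of $\omega_R$, which is ``nontrivial exactly because $\Delta$ is non-$\kk$-orientable'' --- is asserted rather than proved. The equivalence between triviality of the gluing data of $\omega_R$ on the $1$-skeleton and $\kk$-orientability of $\Delta$ is precisely the content that would have to be established here; it is not a formal consequence of Proposition \ref{p:normalpseudo=>level} as stated, and your one-sentence appeal to ``what underlies'' that proposition does not supply it.

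Moreover, your sketch of that step uses a hypothesis you do not have: you write that ``$\kk[\lk_\Delta\rho]$ is Gorenstein for every nonempty $\rho\in\Delta$, so the links entering the cocycle computation are $\kk$-homology spheres.'' The proposition assumes only that $\Delta$ is a non-$\kk$-orientable normal pseudomanifold with $\kk[\Delta]$ Cohen--Macaulay; it does \emph{not} assume that $R$ is Gorenstein on the punctured spectrum, so the links need not be homology spheres (Remark \ref{Nat} exhibits normal pseudomanifolds that are not homology manifolds). In that generality some components $[\omega_R]_{\eee_k}$ may vanish and minimal generators may sit in degrees $\mathbf{1}_\sigma$ with $|\sigma|>1$, so the vertex-indexed cocycle picture you rely on does not even get off the ground. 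To repair the argument, either carry out the orientation-cocycle identification honestly in the required generality, or replace the endgame with the paper's Teter-type argument, which needs none of this.
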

\begin{proof}
Name $R:=\kk[\Delta]$, and for all $\aaa\in\NN^n$ such that $[\omega_R]_{\aaa}\neq 0$ choose a nonzero vector $1_{\aaa}\in [\omega_R]_{\aaa}$. By Gr\"abe structure theorem and Lemma \ref{l:folklore}, $1_{\aaa}$ is a basis of the $\kk$-vector space $[\omega_R]_{\aaa}$, and if $s(\aaa+\eee_i)\in\Delta$ we have $x_i\cdot 1_{\aaa}=r\cdot 1_{\aaa+\eee_i}$ for some $0\neq r\in\kk$.

Suppose that $\tr(\omega_R)\nsubseteq \MM_R^2$.
Then there exists an element $0 \neq x \in \tr(\omega_R) \cap R_1$.
Since $[\omega_R]_0=0$ (by the non-orientability assumption) and $\Hom_R(\omega_R,R)$ is an $\ZZ^n$-graded $R$-module,
there exist $\aaa\in \NN^n$ and a $\ZZ^n$-graded $R$-homomorphism $\phi: \omega_R \rightarrow R$
such that $\phi(1_{\aaa})=x_i$ for some $i\in[n]$.

Since the annihilator of $x_i$ must be contained in that of $1_{\aaa}$, by the structure theorem of Gr\"abe we get $s(\aaa)=\{i\}$. Moreover, if $\aaa\neq \eee_i$ then $\phi(1_{\bbb})$
(where $\bbb=\aaa-\eee_i$) would be an element of $R$ that multiplied by $x_i$ gives $x_i$. Hence $\phi(1_{\bbb})=1$, which is a contradiction again because the annihilator of $1$ should be contained in that of $1_{\bbb}$. So, summarizing, we can assume that there exists a $\ZZ^n$-graded homomorphism $\phi: \omega_{R} \rightarrow R$
such that $\phi(1_{\eee_i})=x_i$ for some $i\in[n]$.

Take any $x_j \in \lk_{\Delta}(x_i)$,
then there exists a $c \in \kk$ such that $\phi(1_{\eee_j})=cx_j$.
For what said above we have
$x_j 1_{\eee_i}=r x_i 1_{\eee_j}$ for some $0 \neq r \in \kk$.
Thus we have $x_ix_j
=\phi(x_j 1_{\eee_i})=\phi(rx_i 1_{\eee_j})=rx_i \phi(1_{\eee_j})=rcx_ix_j$.
Then we obtain $c\neq 0$ ($x_ix_j \neq 0$ because $x_j \in \lk_{\Delta}(x_i)$).

Since $\Delta$, being normal, is in particular connected,
we have that $\phi(1_{\eee_k})$ is a nonzero scalar multiple of $x_k$ for any $1 \le k \le n$.
Thus $\tr(\omega_R)=\phi(\omega_R)(=\MM_R)$. This contradicts \cite[Theorem 2.1]{gasanova2022rings}.
\end{proof}

From the above, we obtain the following result, which provides an affirmative answer to \cite[Question 4.6]{miyashita2024levelness} and was the main motivation to write this paper. 

\begin{corollary}\label{c:nearlyG=G}
Every nearly Gorenstein Stanley--Reisner ring of dimension greater than two is Gorenstein.
\end{corollary}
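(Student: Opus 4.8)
The plan is to reduce the statement to the propositions already established in the excerpt and to handle the reduction to the normal-pseudomanifold case by using the structural results of Section \ref{sect2}. So let $R=\kk[\Delta]$ be nearly Gorenstein, Cohen--Macaulay, with $\dim(\Delta)=d\geq 2$ (so $\dim R \geq 3$), and suppose for contradiction that $R$ is not Gorenstein. The first step is a reduction to the \emph{connected} case: if $\Delta$ is disconnected, then $\kk[\Delta]$ is a nontrivial direct product of lower-dimensional Stanley--Reisner rings, which is never nearly Gorenstein unless each factor is Gorenstein and there is only one factor (a product of two or more nonzero rings has canonical trace contained in the ``diagonal'' and cannot contain $\MM_R$); hence $\Delta$ is connected. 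Since a nearly Gorenstein ring is Gorenstein on the punctured spectrum, we may now invoke Proposition \ref{prop:main1}: because $d\geq 2$, either $\Delta$ has a cone point or $\Delta$ is a $\kk$-homology manifold.

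The second step disposes of the cone-point case. If $v$ is a cone point of $\Delta$, then $R\cong \kk[\lk_\Delta(v)][v]$, and $R$ is nearly Gorenstein if and only if $\kk[\lk_\Delta(v)]$ is (adjoining a polynomial variable preserves and reflects nearly Gorensteinness, e.g. by \cite[Corollary 2.11]{herzog2019trace} or a direct trace computation). Iterating, we may remove all cone points and replace $\Delta$ by the cone-point-free complex $\Delta'$ with $\kk[\Delta']$ nearly Gorenstein; note $\dim\Delta' \geq 2$ still, because otherwise $\kk[\Delta]$ would be Gorenstein (a nearly Gorenstein Stanley--Reisner ring of dimension $\leq 1$ that is a cone over such is Gorenstein — or one simply observes that removing cones cannot raise us out of dimension $3$ into dimension $\leq 2$ while keeping non-Gorensteinness, since Theorem \ref{main1}(Y) would then force $\Delta'$ to be a union of points or a path, forcing $\Delta$ to be Gorenstein, contradiction). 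Now $\Delta'$ is connected, cone-point free, of dimension $\geq 2$, and nearly Gorenstein, so by Proposition \ref{prop:main1} again, $\Delta'$ is a $\kk$-homology manifold; in particular it is a normal pseudomanifold.

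The third and final step applies Proposition \ref{p:nearlyG=G}. If $\Delta'$ is non-$\kk$-orientable, that proposition gives $\tr(\omega_{\kk[\Delta']})\subseteq \MM_{\kk[\Delta']}^2$, which contradicts nearly Gorensteinness (which requires $\tr(\omega)\supseteq\MM$). If instead $\Delta'$ is $\kk$-orientable, then by Proposition \ref{p:normalpseudo=>level} we have $\omega_{\kk[\Delta']}\cong \kk[\Delta']$, i.e. $\kk[\Delta']$ is Gorenstein, hence so is $\kk[\Delta]$ (cones over Gorenstein complexes are Gorenstein), again a contradiction. Either way we reach a contradiction, so $R$ is Gorenstein.

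I expect the main obstacle to be the bookkeeping in the cone-point reduction, specifically making sure that after stripping cone points we do not accidentally land in dimension $\leq 1$ in a way that evades the hypothesis — this is where one must be careful to either invoke the dimension hypothesis correctly (noting $\dim R \geq 3$ means $\dim R \geq 3$ is about $R$, and stripping a cone point lowers the dimension by exactly one, so a single strip keeps us at dimension $\geq 2$ only if we started at $\geq 3$, which we did) or argue directly. The other steps are essentially immediate given Propositions \ref{prop:main1}, \ref{p:normalpseudo=>level} and \ref{p:nearlyG=G}.
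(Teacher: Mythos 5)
There is a genuine gap in your cone-point step. The claim that adjoining a polynomial variable ``preserves and reflects'' near-Gorensteinness is false: since $\omega_{R'[v]}\cong\omega_{R'}\otimes_{R'}R'[v]$ (up to shift) and $R'[v]$ is flat over $R'$, one gets $\tr(\omega_{R'[v]})=\tr(\omega_{R'})R'[v]$, which contains the new variable $v$ only if $1\in\tr(\omega_{R'})$. Hence $R'[v]$ is nearly Gorenstein if and only if $R'$ is \emph{Gorenstein}. (A sanity check: under your premise, the cone over a path of length $3$ would be a $2$-dimensional nearly Gorenstein, non-Gorenstein complex, i.e.\ a counterexample to the very statement you are proving.) This error propagates: your iterative stripping of cone points, and the bookkeeping about whether $\dim\Delta'$ stays $\geq 2$, rests on $\kk[\Delta']$ remaining nearly Gorenstein, and the parenthetical argument you give to rule out landing in dimension $\leq 1$ invokes Theorem \ref{main1}(Y), which in the paper is \emph{deduced from} this corollary --- a circularity --- and moreover its conclusion (``forcing $\Delta$ to be Gorenstein'') does not follow. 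The repair is immediate once the correct fact is used: a cone point $v$ gives $R\cong\kk[\lk_\Delta(v)][v]$, and near-Gorensteinness (indeed, mere Gorensteinness on the punctured spectrum) forces $\kk[\lk_\Delta(v)]$ to be Gorenstein, hence $R$ is Gorenstein, contradiction. This is exactly the paper's Corollary \ref{c:puncGisHomologyMani}, which disposes of the cone-point alternative of Proposition \ref{prop:main1} in one step, with no iteration.

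A second, more minor, flaw is the connectedness reduction: $\kk[\Delta_1\sqcup\Delta_2]$ is \emph{not} a direct product of rings (its $\mathrm{Spec}$ is connected, all components meeting at the origin; it is a fiber product over $\kk$), so the ``diagonal'' argument does not apply. The correct reason is that near-Gorensteinness here includes Cohen--Macaulayness, and a Cohen--Macaulay Stanley--Reisner ring of Krull dimension $\geq 2$ has depth $\geq 2$, which forces $\widetilde{H}^0(\Delta;\kk)=0$, i.e.\ $\Delta$ connected. Your final step --- splitting into the $\kk$-orientable case (where Proposition \ref{p:normalpseudo=>level} gives $\omega_R\cong R$, i.e.\ Gorenstein) and the non-$\kk$-orientable case (where Proposition \ref{p:nearlyG=G} gives $\tr(\omega_R)\subseteq\MM_R^2$, contradicting $\tr(\omega_R)\supseteq\MM_R$) --- is correct and is precisely the paper's argument.
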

\begin{proof}
It follows immediately from Corollary \ref{c:puncGisHomologyMani} and Proposition \ref{p:nearlyG=G}.
Alternatively, it can also be derived from Corollary \ref{c:puncGisHomologyMani}, Proposition \ref{p:normalpseudo=>level} and Proposition \ref{p:Wednesday}.
\end{proof}

\begin{remark}
As mentioned in its proof, Corollary \ref{c:nearlyG=G} can also be regarded as a special case of the following Proposition \ref{p:Wednesday} for a more general class of rings. The proof of this proposition in the case of affine semigroup rings (essentially a standard graded domain) is provided in \cite[Proposition 2.3]{miyashita2023nearly}. Below, we present a proof for generically Gorenstein rings, which are not necessarily integral domains.
\end{remark}

\begin{proposition}\label{p:Wednesday}
Let $R$ be a Cohen--Macaulay generically Gorenstein standard graded ring. Suppose that $[\omega_R]_{-a_R}$ contains an $R$-regular element~(e.g., a level ring of positive dimension)
and $\dim_\kk \left([\omega_R]_{-a_R} \right) \ge \dim_\kk(R_1)$.
Then $R$ is Gorenstein if and only if $[\tr(\omega_R)]_1$ contains a nonzero divisor.
\end{proposition}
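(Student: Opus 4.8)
The plan is to realize $\omega_R$ as a graded ideal and to reduce the non‑Gorenstein case to the impossibility of being of Teter type. We may assume $\dim R\geq 1$ (the only case of interest, cf.\ the parenthetical in the statement: in dimension $0$ a nonzerodivisor is a unit, so the hypothesis would force $R$ to be a field). Since $R$ is Cohen--Macaulay and generically Gorenstein, $\omega_R$ is isomorphic, up to a degree shift, to an ideal $I\subseteq R$, which then automatically contains a nonzerodivisor; the first hypothesis lets us choose this realization so that the image $W$ of $[\omega_R]_{-a_R}$ is the lowest-degree component $[I]_b$ of $I$ (put $b:=\deg g$) and contains a nonzerodivisor $g$ of $R$. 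Recall $\tr(\omega_R)=\tr(I)=I\cdot I^{-1}$, where $I^{-1}=(R:_{Q(R)}I)$ and $Q(R)$ is the total ring of fractions; since $R\subseteq I^{-1}$ we have $I\subseteq\tr(I)$, and (using that $R$ is Cohen--Macaulay) $R$ is Gorenstein $\iff$ $I$ is principal generated by a nonzerodivisor $\iff$ $\tr(I)=R$.

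If $R$ is Gorenstein, then $\tr(\omega_R)=R$, so $[\tr(\omega_R)]_1=R_1$; moreover $\omega_R$ is cyclic, hence $\dim_\kk W=1$, and the second hypothesis gives $\dim_\kk R_1\leq 1$. A generically Gorenstein standard graded ring of positive dimension with embedding dimension at most $1$ is a polynomial ring $\kk[x]$, and then $R_1=\kk x$ contains the nonzerodivisor $x$. This settles one implication.

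For the converse, assume $f\in[\tr(\omega_R)]_1$ is a nonzerodivisor and, for contradiction, that $R$ is not Gorenstein. Put $J:=g^{-1}I\subseteq Q(R)$: it is a fractional ideal isomorphic to $\omega_R$ with $R\subseteq J$, concentrated in degrees $\geq 0$, with $[J]_0=g^{-1}W$ of $\kk$-dimension $=\dim_\kk W\geq\dim_\kk R_1$. Since $R$ is not Gorenstein, $J\neq R$, so $J^{-1}=(R:_{Q(R)}J)$ is a proper ideal of $R$ and $[J^{-1}]_0=0$ (a $\kk$-subspace of $R_0=\kk$ not containing $1$). Because $J^{-1}=gI^{-1}$, substituting $[J^{-1}]_0=0$ into $\tr(\omega_R)=J\cdot J^{-1}$ and matching degrees gives, inside $R_1$,
\[
[\tr(\omega_R)]_1 \;=\; [J]_0\cdot[J^{-1}]_1 \;=\; W\cdot[I^{-1}]_{1-b}.
\]
Thus $f=\sum_k\phi_k(v_k)$ with $v_k\in W$ and each $\phi_k\colon\omega_R\to R$ a graded homomorphism of degree $1-b$ (multiplication by an element of $[I^{-1}]_{1-b}$). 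Now multiplication by $g$ is injective on $Q(R)$ and sends $[I^{-1}]_{1-b}$ into $[J^{-1}]_1\subseteq R_1$, so $\dim_\kk[I^{-1}]_{1-b}\leq\dim_\kk R_1\leq\dim_\kk W$. If $[I^{-1}]_{1-b}=0$, then $[\tr(\omega_R)]_1=0$, contradicting $f\neq 0$. Otherwise I would argue, following the proof of Proposition~\ref{p:nearlyG=G}, that this dimension balance is tight enough to force a single graded homomorphism $\phi_0\colon\omega_R\to R$ to satisfy $\phi_0(\omega_R)=\tr(\omega_R)$; then $R$ would be of Teter type, contradicting \cite[Theorem~2.1]{gasanova2022rings} since $R$ is generically Gorenstein. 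Either way we reach a contradiction, so $R$ is Gorenstein.

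The substantive step is this last rigidity argument: extracting from $\dim_\kk[\omega_R]_{-a_R}\geq\dim_\kk R_1$ (together with the presence of a nonzerodivisor in $[\tr(\omega_R)]_1$) that $\tr(\omega_R)$ is the image of a single homomorphism $\omega_R\to R$, equivalently that a non-Gorenstein $R$ satisfying the hypotheses carries no nonzerodivisor in degree $1$ of its canonical trace. An alternative route would be to pass to $\bar R=R/fR$, which is Cohen--Macaulay (as $f$ is a nonzerodivisor), still satisfies the dimension hypothesis, and is Gorenstein if and only if $R$ is, so that one could try to induct on $\dim R$; the obstacle there is that $\bar R$ need not be generically Gorenstein, so Proposition~\ref{p:Wednesday} does not apply to it directly.
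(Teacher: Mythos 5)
Your overall strategy coincides with the paper's: realize $\omega_R$ as a graded ideal $I$ whose lowest-degree piece $[I]_b$ is the image of $[\omega_R]_{-a_R}$ and contains a nonzerodivisor $g$, use $\tr(\omega_R)=I\cdot I^{-1}$ together with the vanishing of $[g^{-1}I\,(g^{-1}I)^{-1}]_{<1}$ to get $[\tr(\omega_R)]_1=[I]_b\cdot[I^{-1}]_{1-b}$, and aim to contradict \cite[Theorem 2.1]{gasanova2022rings}. However, there is a genuine gap exactly where you flag it: you assert that the ``dimension balance is tight enough to force a single graded homomorphism $\phi_0$ with $\phi_0(\omega_R)=\tr(\omega_R)$,'' but you never prove this, and the inequality you do establish, $\dim_\kk[I^{-1}]_{1-b}\le\dim_\kk R_1\le\dim_\kk[I]_b$, cannot yield it by counting alone: it bounds the space of degree-$(1-b)$ homomorphisms but gives no reason why one of them should realize the entire trace. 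The hypothesis that $[\tr(\omega_R)]_1$ contains a nonzerodivisor has not actually been used in your non-Gorenstein case beyond ruling out $[I^{-1}]_{1-b}=0$.

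The paper closes this step as follows. After reducing to $R_0$ infinite, write $[I^{-1}]_{1-b}=\frac{1}{c}\langle x_1,\dots,x_s\rangle$ with $c$ a nonzerodivisor of $R$ and the $x_i$ homogeneous of a common degree. If every element of $V'=\langle x_1,\dots,x_s\rangle$ were a zerodivisor, then $V'\subseteq\pp$ for a single $\pp\in\Ass(R)$ (prime avoidance for a vector space over an infinite field), whence $c\,[\tr(\omega_R)]_1\subseteq I\cdot V'\subseteq\pp$, contradicting the existence of your nonzerodivisor $f\in[\tr(\omega_R)]_1$. So there is a regular element $\frac{y}{c}\in[I^{-1}]_{1-b}$; multiplication by it is injective, hence carries the $n=\dim_\kk[I]_b\ge\dim_\kk R_1$ linearly independent regular generators of $[I]_b$ to $n$ linearly independent linear forms, which therefore span $R_1$ and generate $\MM_R$. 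Thus $\MM_R\subseteq\frac{y}{c}\cdot I\subseteq\tr(\omega_R)\subseteq\MM_R$ (the last inclusion because $R$ is not Gorenstein), so $\tr(\omega_R)=\frac{y}{c}\cdot I$ is the image of a single homomorphism and $R$ is of Teter type --- the contradiction you were after. Your alternative route (passing to $R/fR$ and inducting) is indeed blocked for the reason you give, so the prime-avoidance production of a regular element of $[I^{-1}]_{1-b}$ is the missing ingredient your argument needs.
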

\begin{proof}
We may assume that $R_0$ is infinite.
If $R$ is Gorenstein, then $[\tr(\omega_R)]_1=R_1 \neq 0$ contains a nonzero divisor.
Assume that $R$ is not Gorenstein and there exists a homogeneous element $z \in [\tr(\omega_R)]_1$.
Put $n=\dim_\kk([\omega_R]_{-a_R})$ and $a=-a_R$.
Then by \cite[Proposition 5.15~(2)]{miyashita2024linear},
	the canonical module $\omega_R$ can be seen as a homogeneous fractional ideal $J$ of $R$ minimally generated by $R$-regular elements $f_1,\cdots,f_m$, where $m \ge n$.
	We may assume that $f_1,\cdots,f_n \in J_a$.
    Note that $\tr(\omega_R)=J \cdot J^{-1}$ by
    \cite[Lemma 2.6]{miyashita2024linear}.
    Since $z \in [\tr(\omega_R)]_1$, we can write $z=\sum_{i=1}^m{f_i g_i}$, where $g_1,\cdots,g_m$ are homogeneous elements of $J^{-1}$. Moreover, we have $g_i=0$ for any $n < i \le m$ because $f_1,\cdots,f_n \in J_a$ are $R$-regular. Therefore the vector space $V=[J^{-1}]_{1-a}$ is equal to $\frac{1}{b} \langle x_1,\cdots,x_s \rangle$ where $b$ is a nonzero divisor of $R$, $s$ is natural number and $x_1,\cdots,x_s$ homogeneous elements of $R$ of the same degree, $1-a+\deg(b)$.
	
Then the vector space $V'=\langle x_1,\cdots,x_s \rangle$ contains a nonzero divisor. Otherwise, $V'$ is contained in some associated prime $\pp$ of $R$,
and so $b[\tr(\omega_R)]_1 R\subseteq J V' \subseteq \pp$, thus $[\tr(\omega_R)]_1 R \subseteq \pp$. This yields a contradiction.
Therefore, there exists an $R$-regular element $\frac{y}{b} \in V$, so that $\frac{y}{b} \cdot J$ must contain $n$ linearly independent linear forms of $R$, hence $\frac{y}{b} \cdot J\supset \MM_R$.
So we have
$n = \dim_\kk(R_1)$ and, since $R$ is not Gorenstein, $\tr(\omega_R)=\frac{y}{b} \cdot J$.
Thus, \( R \) is of Teter type, a contradiction to \cite[Theorem 2.1]{gasanova2022rings}.
\end{proof}

\begin{remark}
If $\kk$ has characteristic 2, we can weaken the hypothesis in Corollary \ref{c:nearlyG=G}: a Cohen--Macaulay Stanley--Reisner ring of dimension greater than two is Gorenstein as soon as it is Gorenstein on the punctured spectrum.
If $\kk$ has characteristic not equal to 2, there exist Cohen--Macaulay Stanley--Reisner rings that are Gorenstein on the punctured spectrum but not nearly Gorenstein. For example, the Stanley--Reisner ring $\kk[\Delta]$ of the minimal triangulation $\Delta$ of the real projective plane is such a case (see \cite[Example 4.5]{miyashita2024levelness}). Our next purpose is to compute $\tr(\omega_{\kk[\Delta]})$ in these situations.
\end{remark}

\begin{proposition}\label{prop:main2}
Let $\Delta$ be a non-$\kk$-orientable $\kk$-homology manifold. Then we have $\tr(\omega_{\kk[\Delta]}) \supseteq \MM_{\kk[\Delta]}^2$.
\end{proposition}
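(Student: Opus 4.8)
Set $R:=\kk[\Delta]$. Since $\Delta$ is non-$\kk$-orientable, Lemma \ref{l:orientability}(4) forces $\kk$ to have characteristic different from $2$, and $d:=\dim\Delta\geq 1$. A $\kk$-homology manifold is a normal pseudomanifold, so Theorem \ref{t:Grabe}, Lemma \ref{l:folklore} and Proposition \ref{p:normalpseudo=>level} all apply; since $\lk_\Delta(\sigma)$ is a $\kk$-homology sphere for every nonempty face $\sigma$, they give $\dim_\kk[\omega_R]_\aaa=1$ for all $\aaa\in\NN^n\setminus\{\mathbf 0\}$ with $s(\aaa)\in\Delta$, and $[\omega_R]_\aaa=0$ for all other $\aaa$ (the vanishing at $\aaa=\mathbf 0$ is exactly the non-orientability). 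Fix a basis vector $1_\aaa$ of each nonzero $[\omega_R]_\aaa$; by Theorem \ref{t:Grabe} and Lemma \ref{l:folklore}(2), $x_l\cdot 1_\aaa=0$ when $s(\aaa)\cup\{l\}\notin\Delta$ and $x_l\cdot 1_\aaa=c_{\aaa,l}\,1_{\aaa+\eee_l}$ with $c_{\aaa,l}\in\kk\setminus\{0\}$ otherwise. Because $\MM_R^2$ is generated by the degree-$2$ monomials of $R$ and $x_ax_b=0$ whenever $\{a,b\}\notin\Delta$, it suffices to prove $x_j^2\in\tr(\omega_R)$ for every vertex $j$ and $x_ax_j\in\tr(\omega_R)$ for every edge $\{a,j\}$ of $\Delta$; we do this by constructing, for each vertex $j$, a homogeneous $R$-homomorphism $\tilde\mu_j\colon\omega_R\to R$ of multidegree $\eee_j$ with $\tilde\mu_j(1_{\eee_j})=x_j^2$.

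Fix $j$ and let $U_j:=R\cdot 1_{\eee_j}\subseteq\omega_R$. A degree-by-degree check using the multiplication rule above shows that $x^{\mathbf c}\cdot 1_{\eee_j}\neq 0$ precisely when $s(\mathbf c)\cup\{j\}\in\Delta$, in which case it spans $[\omega_R]_{\mathbf c+\eee_j}$; in particular $[U_j]_\bfb=[\omega_R]_\bfb$ whenever $j\in s(\bfb)$, and the nonzero elements $x^{\mathbf c}1_{\eee_j}$ form a multihomogeneous basis of $U_j$. Define $\mu_j\colon U_j\to R$ on this basis by $\mu_j(x^{\mathbf c}1_{\eee_j}):=x^{\mathbf c+2\eee_j}$. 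It is well defined (distinct basis vectors have distinct degrees), $R$-linear (because $x^{\mathbf c}1_{\eee_j}$ and $x^{\mathbf c+2\eee_j}$ vanish under the same condition $s(\mathbf c)\cup\{j\}\notin\Delta$), injective, and $\mu_j(1_{\eee_j})=x_j^2$.

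The crux is to extend $\mu_j$ to all of $\omega_R$. Decompose $\omega_R=U_j\oplus W_j$ as $\kk$-vector spaces, with $W_j:=\bigoplus_{j\notin s(\aaa)}[\omega_R]_\aaa$; for $\eta\in[\omega_R]_\aaa\subseteq W_j$ one has $x_j\eta\in U_j$, and when $\mu_j(x_j\eta)\neq 0$ multiplication by $x_j$ is an isomorphism $R_{\aaa+\eee_j}\to R_{\aaa+2\eee_j}$ of one-dimensional spaces, so there is a unique $\tilde\mu_j(\eta)\in R_{\aaa+\eee_j}$ with $x_j\tilde\mu_j(\eta)=\mu_j(x_j\eta)$ (set $\tilde\mu_j(\eta)=0$ when $\mu_j(x_j\eta)=0$; this is consistent since then $R_{\aaa+\eee_j}=0$). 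Put $\tilde\mu_j:=\mu_j$ on $U_j$. It remains to check that $\tilde\mu_j$ is $R$-linear, i.e. $x_l\tilde\mu_j(\eta)=\tilde\mu_j(x_l\eta)$ for every variable $x_l$ and every homogeneous $\eta$; this is the step I expect to demand the most care, as one must track the Gr\"abe constants. The point is that $\tilde\mu_j$ commutes with multiplication by $x_j$ by construction, so using $R$-linearity of $\mu_j$ and $x_lx_j=x_jx_l$ one gets $x_j\bigl(x_l\tilde\mu_j(\eta)\bigr)=\tilde\mu_j(x_lx_j\eta)=\tilde\mu_j(x_jx_l\eta)=x_j\tilde\mu_j(x_l\eta)$, and then the desired equality follows because multiplication by $x_j$ is injective on the one-dimensional component $R_{\aaa+\eee_l+\eee_j}$ (or that component is zero). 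In other words, the extension is forced once compatibility with $x_j$ is imposed, and compatibility with the remaining variables comes for free from commutativity of $R$.

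With the $\tilde\mu_j$ constructed, we conclude. For every vertex $j$, $x_j^2=\tilde\mu_j(1_{\eee_j})\in\tr(\omega_R)$. For every $a\in\lk_\Delta(j)$ we have $x_j\tilde\mu_j(1_{\eee_a})=\mu_j(x_j\,1_{\eee_a})$, which is nonzero because $x_j\,1_{\eee_a}\neq 0$ (Lemma \ref{l:folklore}(2), as $\{a,j\}\in\Delta$) and $\mu_j$ is injective; hence $0\neq\tilde\mu_j(1_{\eee_a})\in R_{\eee_a+\eee_j}=\kk\,x_ax_j$, so $x_ax_j\in\tr(\omega_R)$. Therefore $\tr(\omega_R)$ contains every degree-$2$ monomial of $R$, and so $\MM_R^2\subseteq\tr(\omega_R)$.
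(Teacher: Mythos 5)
Your construction is correct, and it reaches the same family of homomorphisms as the paper does, but by a genuinely different technical route. The paper works with Gr\"abe's explicit realization of $\omega_R$ as the ideal $\kk(\Delta)=\iota(\omega_R)\subseteq R$ and the identity $\tr(\omega_R)=\kk(\Delta)\cdot\kk(\Delta)^{-1}$ from \cite[Lemma 1.1]{herzog2019trace}: it exhibits your $\tilde\mu_j$ as multiplication by the fraction $x_j^2/f_j$, where $f_j=\iota(1_{\{j\}})+\sum_{k\notin\st(j)}x_k^{\dim(\Delta)+1}$ is an explicitly constructed nonzero divisor. In that formulation $R$-linearity is automatic, and the only thing to verify is that $\tfrac{x_j^2}{f_j}\cdot\iota(1_{\{k\}})$ lands in $R$ for every vertex $k$ (it equals $x_jx_k$ on $\st(j)$ and $0$ off it). You instead stay entirely inside Gr\"abe's multigraded module description, start from the tautological isomorphism $R\cdot 1_{\eee_j}\cong Rx_j^2$ (both being $R$ modulo the annihilator of $x_j^2$), and extend to all of $\omega_R$ by forcing compatibility with multiplication by $x_j$; the burden then shifts to proving $R$-linearity of the extension, which your commutativity argument handles correctly --- the key point being that multiplication by $x_j$ is injective on every monomial component $R_{\bfb}$ with $j\in s(\bfb)$, and all the components you compare do contain $j$ in their support. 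Your route avoids both the fractional-ideal formalism and the verification that $f_j$ is a nonzero divisor, at the price of the more delicate extension-and-linearity check; the paper's route buys automatic linearity at the price of introducing $f_j$. One small remark: the observation that $\kk$ must have characteristic $\neq 2$ is not actually needed anywhere in your argument --- the only input from non-$\kk$-orientability is $[\omega_R]_{\mathbf 0}=\widetilde{H}_d(\Delta;\kk)=0$, which you use directly.
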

\begin{proof}
Let $R=\kk[\Delta]$.
and fix $i\in[n]$. 
By Lemma~\ref{l:folklore}~(2), note that $x_i:[\omega_R]_{\eee_j} \rightarrow [\omega_R]_{\eee_i+\eee_j}$ is a surjection for any $j \in \lk_\Delta(x_i)$.
Thus we can choose a set of $\kk$-basis $\{ 1_{\{k\}} : k \text{\;is a vertex of\;} \Delta \}$ for $[\omega_R]_1$ such that $x_i \cdot 1_{\{j\}} = x_j \cdot 1_{\{i\}}$ holds for any $j \in \lk_\Delta(x_i)$.
Let $\iota: \omega_R \hookrightarrow R$ be the natural injection defined in \cite[Theorem 5]{grabe1984canonical}, such that $\iota(\omega_R) = \kk(\Delta)$.
Notice that $\kk(\Delta)=R(\iota(1_{ \{1\} }),\ldots,\iota(1_{ \{n\} }))$.
Consider 
\[f_i=\iota(1_{\{i\}})+\sum_{k \text{\;is a vertex of\;} \Delta \text{\;such that\;} k \notin \st(i) }x_k^{\dim(\Delta)+1}.\]
Then $f_i$ is a homogeneous nonzero divisor of $R$
because $x^\aaa f_i \neq 0$ for any $\aaa \in \NN^n$ such that $s(\aaa) \in \FF(\Delta)$.
Notice that $x_i \cdot \iota(1_{\{j\}}) = x_j \cdot \iota(1_{\{i\}})$ holds for any $j \in \lk_\Delta(x_i)$.
Therefore, for any $j \in [n]$, we have

\[
\frac{x_i^2}{f_i} \cdot \iota(1_{ \{j\} }) = 
\begin{cases}
    x_ix_j, & \text{if \;} j \text{\;is a vertex of $\st(i)$}, \\
    0, & \text{otherwise}.
\end{cases} \qquad
\]
In particular $\displaystyle \frac{x_i^2}{f_i} \in \kk(\Delta)^{-1}$. Since $\tr(\omega_R)=\kk(\Delta) \cdot \kk(\Delta)^{-1}$ by \cite[Lemma 1.1]{herzog2019trace}, we obtain 
$$\tr(\omega_R) \supseteq (x_ix_j : j \text{\;is a vertex of $\st(i)$}).$$
Since $i \in [n]$ is arbitrary, we have $\tr(\omega_R) \supseteq 
\sum_{i \in [n]} (x_ix_j : j \text{\;is a vertex of $\st(i)$})
=\MM_R^2$.
\end{proof}

\begin{corollary}\label{cor:main2}
Let $\Delta$ be a non-oriantable $\kk$-homology manifold on $n$ vertices.
Assume that $\kk[\Delta]$ is Cohen--Macaulay.
Then $\kk[\Delta]$ is level of Cohen--Macaulay type $n$. Moreover, we have $\tr(\omega_R)=\MM_R^2$.
\end{corollary}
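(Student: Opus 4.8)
The plan is to assemble the pieces already proved. Since $\Delta$ is a non-$\kk$-orientable $\kk$-homology manifold and $\kk[\Delta]$ is Cohen--Macaulay, in particular $\Delta$ is a normal pseudomanifold, so Proposition \ref{p:normalpseudo=>level} applies: in characteristic $2$ every $\kk$-homology manifold is $\kk$-orientable by Lemma \ref{l:orientability}(4), so the non-orientability hypothesis forces $\operatorname{char}(\kk) \neq 2$, and then part (2) of Proposition \ref{p:normalpseudo=>level} gives $\omega_R = R[\omega_R]_1$. Thus $\omega_R$ is generated in the single degree $1$, which is precisely the definition of level; this settles levelness. For the Cohen--Macaulay type, recall that the type equals $\dim_\kk [\omega_R \otimes_R \kk]$, and since $\omega_R$ is generated in degree $1$ this is $\dim_\kk [\omega_R]_1$. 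By Gr\"abe's structure theorem (Theorem \ref{t:Grabe}) together with Lemma \ref{l:folklore}, for each vertex $\{k\}$ of $\Delta$ we have $[\omega_R]_{\eee_k} \cong H_d(\Delta, \cost_\Delta(\{k\});\kk) \cong \widetilde{H}_{d-1}(\lk_\Delta(\{k\});\kk) \cong \kk$, because $\Delta$ being a $\kk$-homology manifold makes each vertex link a $\kk$-homology sphere of dimension $d-1$. Summing over the $n$ vertices, $\dim_\kk [\omega_R]_1 = n$, so the type is $n$.

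For the final claim $\tr(\omega_R) = \MM_R^2$, the containment $\tr(\omega_R) \supseteq \MM_R^2$ is exactly Proposition \ref{prop:main2}. For the reverse containment, I would invoke Proposition \ref{p:nearlyG=G}: since $\Delta$ is a non-$\kk$-orientable normal pseudomanifold (a $\kk$-homology manifold is connected, and being a pseudomanifold it is normal once $\kk[\Delta]$ is $(S_2)$, which is automatic here) and $\kk[\Delta]$ is Cohen--Macaulay, that proposition gives $\tr(\omega_R) \subseteq \MM_R^2$. Combining the two containments yields the equality.

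The only point requiring a little care is the justification that $\Delta$ is a \emph{normal} pseudomanifold, so that Propositions \ref{p:normalpseudo=>level}, \ref{p:nearlyG=G} and Lemma \ref{l:folklore} all apply: a $\kk$-homology manifold is connected by definition, every $(d-1)$-face link is a $\kk$-homology sphere of dimension $0$, hence consists of exactly two points, giving the pseudomanifold condition; and $\kk[\Delta]$ Cohen--Macaulay implies $(S_2)$, which is equivalent to normality of $\Delta$. I expect this bookkeeping, rather than any new argument, to be the main thing to get right; everything substantive has already been done in the earlier propositions.
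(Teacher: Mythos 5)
Your proposal is correct and follows exactly the route the paper takes: the paper's proof is just the one-line citation of Propositions \ref{p:normalpseudo=>level}, \ref{p:nearlyG=G} and \ref{prop:main2}, and you have filled in precisely the bookkeeping (normal pseudomanifold, characteristic $\neq 2$, type $= \dim_\kk[\omega_R]_1 = n$ via Gr\"abe and the vertex links being homology spheres) that those citations presuppose. The only caveat is that "non-orientable'' in the statement must be read as "non-$\kk$-orientable'' (as in Theorem \ref{main1}(Z) and the two propositions you invoke) for your deduction that $\operatorname{char}(\kk)\neq 2$ to be valid; with that reading everything checks out.
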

\begin{proof}
It follows from
Proposition \ref{p:normalpseudo=>level},
Proposition \ref{p:nearlyG=G}
and Proposition \ref{prop:main2}.
\end{proof}

\begin{remark}
In Proposition \ref{p:nearlyG=G}, we assume that $R$ is Cohen--Macaulay; however, even in the non-Cohen--Macaulay case, it follows from the proof above that $\omega_R$ is generated by $[\omega_R]_1$ as an $R$-module and that $\tr(\omega_R) \supseteq \MM_R^2$.
\end{remark}

We are now in a position to establish Theorem \ref{main1} and Theorem \ref{main2}.
\begin{proof}[Proof of Theorem \ref{main1}]
First we show (X).
It is evident from \cite[Lemma 2.1]{herzog2019trace} that the Cohen--Macaulay ring $R=\kk[\Delta]$ is Gorenstein on the punctured spectrum when $\tr(\omega_R) = \MM_R^i$ for some $i \in \{0,1,2\}$.
Assume that $R$ is Gorenstein on the punctured spectrum.
When $\dim(\Delta) \leq 1$, $\Delta$ is nearly Gorenstein if and only if it is Gorenstein on the punctured spectrum by \cite[Theorem 4.3~(a) and (b)]{miyashita2024levelness}, so if $\dim \Delta\leq 1$ and is $R$ Gorenstein on the punctured spectrum, either $\tr(\omega_R)=R$ or $\tr(\omega_R)=\MM_R$. If $\dim(\Delta) \ge 2$,
then either $R$ is Gorenstein or $\Delta$ is a $\kk$-homology manifold by Corollary \ref{c:puncGisHomologyMani}.
Note that, according to Proposition \ref{p:normalpseudo=>level}, $\kk[\Delta]$ is Gorenstein if $\Delta$ is $\kk$-orientable.
Thus we have either $\tr(\omega_R)=R$ or $\tr(\omega_R)=\MM_R^2$ by Corollary \ref{cor:main2}.
Therefore, in each case, it follows that $\tr(\omega_R) = \MM_R^i$ for some $i \in \{0,1,2\}$.

Next we show (Y).
When $\dim(\Delta) \ge 2$, $R$ is nearly Gorenstein if and only if $R$ is Gorenstein by Corollary \ref{c:nearlyG=G}.
Then we have $\dim(\Delta) \le 1$, in which case the statement follows from \cite[Theorem 4.3~(a) and (b)]{miyashita2024levelness}.
Lastly, we show (Z).
(2) $\Rightarrow$ (1) follows from Corollary \ref{cor:main2}.
Assume that $\tr(\omega_R)=\MM_R^2$.
Since $R$ is Gorenstein on the punctured spectrum but not nearly Gorenstein, we conclude that $\dim(\Delta) \ge 2$. Otherwise, by \cite[Theorem 4.3~(a) and (b)]{miyashita2024levelness}, $R$ would be nearly Gorenstein because it is Gorenstein on the punctured spectrum, leading to a contradiction.
Therefore, $\Delta$ is a non-$\kk$-orientable $\kk$-homology manifold by Corollary \ref{c:puncGisHomologyMani} and Proposition \ref{p:normalpseudo=>level}.
\end{proof}


\begin{proof}[Proof of Theorem \ref{main2}]
Concerning $(X)$, if $\dim(\Delta)\leq 1$ it follows by \cite[Theorem 4.3]{miyashita2024levelness}, when $\dim(\Delta)\geq 2$ by Corollary \ref{c:puncGisHomologyMani} and Proposition \ref{p:normalpseudo=>level}.
Lastly, we show $(Y)$.
(3) $\Leftrightarrow (4)$ follows from Theorem \ref{main1}.
Notice that every Stanley--Reisner ring of $0$-dimensional simplicial complex is almost Gorenstein.
In fact, since it has minimal multiplicity and is nearly Gorenstein by \cite[Theorem 4.3(a)]{miyashita2024levelness}, this follows from \cite[Theorem 6.6]{herzog2019trace} (or alternatively from \cite[Corollary 4.8(b)]{miyashita2024comparing}).
Thus (4) $\Rightarrow$ (2) follows from \cite[Theorem 4.3~(c)]{miyashita2024levelness} (see also \cite[Proposition~3.8]{matsuoka2016uniformly}).
(2) $\Rightarrow$ (1) is clear.
We show that 
(1) $\Rightarrow$ (4).
It is enough to show that $\dim(\Delta)<2$ by Theorem \ref{main1}.
Suppose that $\dim(\Delta) \ge 2$.
Notice that $R$ is level by Theorem \ref{main2}~(X).
Thus the socle degree $s(R)$ of $R$ should be equal to 1 by \cite[Theorem 5.1]{miyashita2024comparing}.
At this point, the codimension of $R$ is equal to its Cohen--Macaulay type. Therefore, by Corollary \ref{c:puncGisHomologyMani} and Proposition \ref{p:normalpseudo=>level}, the codimension of $R$ equals its embedding dimension. Consequently, we obtain that $\dim(R) = 0$, which contradicts the fact that $\dim(R) \ge 1$.
\end{proof}

\section*{Acknowledgments}
The first author gratefully acknowledges his supervisor, Akihiro Higashitani, for providing the opportunity to visit the University of Genoa.
The second author was is supported by PRIN~2020355B8Y ``Squarefree Gr\"obner degenerations, special varieties and related topics,'' by MIUR Excellence Department Project awarded to the Dept.~of Mathematics, Univ.~of Genova, CUP D33C23001110001.


\begin{bibdiv}
\begin{biblist}

\bib{aoyama1985endomorphism}{article}{
  title={On the endomorphism ring of the canonical module},
  author={Aoyama, Yoichi},
  author={Goto, Shiro},
  journal={J. Math. Kyoto Univ},
  volume={25},
  number={1},
  pages={21--30},
  year={1985},
  publisher={Springer}
}

\bib{bagherpoor2023trace}{article}{
      author={Bagherpoor, Mohammad},
      author={Taherizadeh, Abdoljavad},
       title={Trace ideals of semidualizing modules and two generalizations of nearly Gorenstein rings},
        date={2023},
     journal={\em Communications in Algebra},
      volume={51},
      number={2},
       pages={446\ndash 463},
}

\bib{bruns1998cohen}{book}{
  title={Cohen--Macaulay rings},
  author={Bruns, Winfried}
  author={Herzog, Juergen}
  number={39},
  year={1998},
  publisher={Cambridge university press}
}

\bib{Bj}{book}{
author={A. Bj\"orner},
title={Topological methods, In: Handbook of combinatorics},
date={1995},
journal={R. Graham, M. Grotschel and L. Lovasz, Eds., Elsevier, Amsterdam},
pages={1819 \ndash 1872},
}

\bib{caminata2021nearly}{article}{
      author={Caminata, Alessio},
      author={Strazzanti, Francesco},
       title={Nearly Gorenstein cyclic quotient singularities},
        date={2021},
     journal={\em Beitr{\"a}ge zur Algebra und Geometrie/Contributions to Algebra and Geometry},
      volume={62},
      number={4},
       pages={857\ndash 870},
}

\bib{celikbas2023traces}{article}{
      author={Celikbas, Ela},
      author={Herzog, J{\"u}rgen},
      author={Kumashiro, Shinya},
       title={Traces of semi-invariants},
        date={2023},
     journal={arXiv preprint, arXiv:2312.00983},
}

\bib{dao2020trace}{article}{
      author={Dao, Hailong},
      author={Kobayashi, Toshinori},
      author={Takahashi, Ryo},
       title={Trace ideals of canonical modules, annihilators of Ext modules, and classes of rings close to being Gorenstein},
        date={2021},
     journal={\em Journal of Pure and Applied Algebra},
      volume={225},
      number={9},
       pages={106655},
}

\bib{ding1993note}{article}{
      author={Ding, Songqing},
       title={A note on the index of Cohen--Macaulay local rings},
        date={1993},
     journal={\em Communications in Algebra},
      volume={21},
      number={1},
       pages={53\ndash 71},
}

\bib{ficarra2024canonical}{article}{
      author={Ficarra, Antonino},
       title={The canonical trace of Cohen--Macaulay algebras of codimension 2},
        date={2024},
     journal={arXiv preprint, arXiv:2406.07517},
}

\bib{ficarra2024canonical!}{article}{
      author={Ficarra, Antonino},
      author={Herzog, J{\"u}rgen},
      author={Stamate, Dumitru~I},
      author={Trivedi, Vijaylaxmi},
       title={The canonical trace of determinantal rings},
        date={2024},
     journal={\em Archiv der Mathematik},
      volume={123},
      number={5},
       pages={487\ndash 497},
}

\bib{gasanova2022rings}{article}{
      author={Gasanova, Oleksandra},
      author={Herzog, Juergen},
      author={Hibi, Takayuki},
      author={Moradi, Somayeh},
       title={Rings of Teter type},
        date={2022},
     journal={\em Nagoya Mathematical Journal},
      volume={248},
       pages={1005\ndash 1033},
}

\bib{goto2015almost}{article}{
      author={Goto, Shiro},
      author={Takahashi, Ryo},
      author={Taniguchi, Naoki},
       title={Almost Gorenstein rings--towards a theory of higher dimension},
        date={2015},
     journal={\em Journal of Pure and Applied Algebra},
      volume={219},
      number={7},
       pages={2666\ndash 2712},
}

\bib{grabe1984canonical}{article}{
      author={Gr{\"a}be, Hans-Gert},
       title={The canonical module of a Stanley--Reisner ring},
        date={1984},
     journal={\em Journal of Algebra},
      volume={86},
      number={1},
       pages={272\ndash 281},
}

\bib{grabe1984quasimanifold}{article}{
      author={Gr\"abe, Hans-Gert},
       title={\"Uber den Stanley--Reisner-Ring von Quasimannigfaltigkeiten},
        date={1984},
        ISSN={0025-584X,1522-2616},
     journal={Math. Nachr.},
      volume={117},
       pages={161\ndash 174},
         url={https://doi.org/10.1002/mana.3211170112},
      review={\MR{755300}},
}

\bib{hall2023nearly}{article}{
      author={Hall, Thomas},
      author={K{\"o}lbl, Max},
      author={Matsushita, Koji},
      author={Miyashita, Sora},
       title={Nearly Gorenstein polytopes},
        date={2023},
     journal={\em The Electronic Journal of Combinatorics 30 (4)},
}

\bib{herzog1971canonical}{book}{
author={Herzog, J.},
author={Kunz, E.},
title={Der kanonische Modul eines Cohen--Macaulay Rings},
volume={238},
publisher={Lect. Notes in Math. Springer-Verlag, Berlin},
date={1971},
}

\bib{herzog2019trace}{article}{
      author={Herzog, J.},
      author={Hibi, T.},
      author={Stamate, D.~I.},
       title={The trace of the canonical module},
        date={2019},
     journal={\em Israel Journal of Mathematics},
      volume={233},
       pages={133\ndash 165},
}


\bib{herzog2019measuring}{article}{
      author={Herzog, J{\"u}rgen},
      author={Mohammadi, Fatemeh},
      author={Page, Janet},
       title={Measuring the non-Gorenstein locus of Hibi rings and normal affine semigroup rings},
        date={2019},
     journal={\em Journal of Algebra},
      volume={540},
       pages={78\ndash 99},
}

\bib{hibi2021nearly}{article}{
      author={Hibi, Takayuki},
      author={Stamate, Dumitru~I},
       title={Nearly Gorenstein rings arising from finite graphs},
        date={2021},
     journal={\em The Electronic Journal of Combinatorics},
       pages={P3\ndash 28},
}

\bib{huneke2006rings}{article}{
      author={Huneke, Craig},
      author={Vraciu, Adela},
       title={Rings that are almost Gorenstein},
        date={2006},
     journal={\em Pacific Journal of Mathematics},
      volume={225},
      number={1},
       pages={85\ndash 102},
}

\bib{jafari2024nearly}{article}{
      author={Jafari, Raheleh},
      author={Strazzanti, Francesco},
      author={Armengou, Santiago~Zarzuela},
       title={On nearly Gorenstein affine semigroups},
        date={2024},
     journal={arXiv preprint, arXiv:2411.12081},
}

\bib{kumashiro2023nearly}{article}{
      author={Kumashiro, Shinya},
      author={Matsuoka, Naoyuki},
      author={Nakashima, Taiga},
       title={Nearly Gorenstein local rings defined by maximal minors of a $2 \times n$ matrix},
        date={2023},
     journal={arXiv preprint, arXiv:2308.04234},
}

\bib{lu2024chain}{article}{
      author={Lu, Dancheng},
       title={The chain algebra of a pure poset},
        date={2024},
     journal={arXiv preprint, arXiv:2410.05024},
}

\bib{lyle2024annihilators}{article}{
      author={Lyle, Justin},
      author={Maitra, Sarasij},
       title={Annihilators of (co) homology and their influence on the trace ideal},
        date={2024},
     journal={arXiv preprint, arXiv:2409.04686},
}

\bib{matsuoka2016uniformly}{article}{
      author={Matsuoka, Naoyuki},
      author={Murai, Satoshi},
       title={Uniformly Cohen--Macaulay simplicial complexes and almost Gorenstein* simplicial complexes},
        date={2016},
     journal={\em Journal of Algebra},
      volume={455},
       pages={14\ndash 31},
}

\bib{miyashita2023nearly}{article}{
      author={Miyashita, Sora},
       title={Nearly Gorenstein projective monomial curves of small codimension},
        date={2023},
     journal={arXiv preprint, arXiv:2302.04027},
}

\bib{miyashita2024comparing}{article}{
      author={Miyashita, Sora},
       title={Comparing generalized Gorenstein properties in semi-standard graded rings},
        date={2024},
     journal={\em Journal of Algebra},
      volume={647},
       pages={823\ndash 843},
}

\bib{miyashita2024levelness}{article}{
      author={Miyashita, Sora},
       title={Levelness versus nearly Gorensteinness of homogeneous rings},
        date={2024},
     journal={\em Journal of Pure and Applied Algebra},
      volume={228},
      number={4},
       pages={107553},
}

\bib{miyashita2024linear}{article}{
      author={Miyashita, Sora},
       title={A linear variant of the nearly Gorenstein property},
        date={2024},
     journal={arXiv preprint, arXiv:2407.05629},
}

\bib{miyazaki2021Gorenstein}{article}{
      author={Miyazaki, Mitsuhiro},
       title={On the Gorenstein property of the Ehrhart ring of the stable set polytope of an h-perfect graph},
        date={2021},
     journal={\em International Electronic Journal of Algebra},
      volume={30},
      number={30},
       pages={269\ndash 284},
}

\bib{miyazaki2024non}{article}{
      author={Miyazaki, Mitsuhiro},
      author={Page, Janet},
       title={Non-Gorenstein loci of Ehrhart rings of chain and order polytopes},
        date={2024},
     journal={\em Journal of Algebra},
      volume={643},
       pages={241\ndash 283},
}

\bib{moscariello2021nearly}{article}{
      author={Moscariello, Alessio},
      author={Strazzanti, Francesco},
       title={Nearly Gorenstein vs almost Gorenstein affine monomial curves},
        date={2021},
     journal={\em Mediterranean Journal of Mathematics},
      volume={18},
      number={4},
       pages={127},
}

\bib{srivastava2023nearly}{article}{
      author={Srivastava, Pranjal},
       title={On nearly Gorenstein simplicial semigroup algebras},
        date={2023},
     journal={arXiv preprint, arXiv:2310.00619},
}

\bib{stanley2007combinatorics}{book}{
      author={Stanley, Richard~P},
       title={Combinatorics and Commutative Algebra},
   publisher={Springer Science \& Business Media},
        date={2007},
      volume={41},
}


\end{biblist}
\end{bibdiv}

\end{document}